\documentclass[12pt]{article}

\usepackage{amsmath, latexsym, amsfonts, amssymb, amsthm, amscd, enumerate, comment, stackrel, mathrsfs}
\usepackage[square,sort,comma,numbers]{natbib}

\usepackage[normalem]{ulem}
\usepackage[usenames,dvipsnames,svgnames,table]{xcolor}
\usepackage[left=3cm, right=2.5cm, top=2.5cm, bottom=2.5cm]{geometry}
\usepackage[colorlinks=true, linkcolor=blue, urlcolor=blue, citecolor=blue]{hyperref}

\setlength{\parindent}{0pt}
\setlength{\parskip}{1ex plus 0.5ex minus 0.2ex}

\newtheorem{theorem}{Theorem}[section]

\newtheorem{lemma}[theorem]{Lemma}

\theoremstyle{definition}
\newtheorem{remark}[theorem]{Remark}
\newtheorem{definition}[theorem]{Definition}

\numberwithin{equation}{section}

\newcommand{\N}{\mathbb{N}}
\newcommand{\cF}{\mathcal{F}}
\newcommand{\E}{\mathbb{E}}
\newcommand{\kzero}{7}

\newcommand{\nat}[1]{{\color{black} #1}} 

\title{The contact process with fitness on Galton-Watson trees}
\author{Natalia Cardona-Tob\'on  and Marcel Ortgiese }

\begin{document}

\begin{center}
{\Large \bf The contact process with fitness on\\[3mm] random trees
}\\[5mm]

\vspace{0.7cm}
\textsc{Natalia Cardona-Tob\'on\footnote{Departamento de Estadística, Universidad Nacional de Colombia, Carrera 45 No 26-85, 111321 Bogotá,
Colombia. {\tt 	ncardonat@unal.edu.co}}} and \textsc{Marcel Ortgiese*\footnote{Department of Mathematical Sciences, University of Bath, Claverton Down, Bath, BA2 7AY,
United Kingdom, {\tt ma2mo@bath.ac.uk}}}

{\textit{*Corresponding author:	ma2mo@bath.ac.uk}}
\vspace{0.5cm}
 \end{center}

\vspace{0.3cm}

\begin{abstract}
  \noindent 
The contact process is a simple model for the spread of an infection in a structured population. We consider a variant of this process on Bienaymé-Galton-Watson trees, where vertices are equipped with a random  fitness representing  inhomogeneous transmission rates among individuals. In this paper, we establish conditions under which this inhomogeneous contact process exhibits a phase transition. We first prove that if certain mixed moments of the joint offspring and fitness distribution are finite, then the survival threshold is strictly positive. Further, we show that, if slightly different  mixed moments are infinite, then this implies that there is no phase transition and the process survives with positive probability for any choice of the infection parameter. A similar dichotomy is known for the contact process on a Bienaymé-Galton-Watson tree. However, we show that  the introduction of  fitness means that we have to take into account the combined effect of fitness and offspring distribution  to decide which scenario occurs.

  \par\medskip
\footnotesize
\noindent{\emph{2020 Mathematics Subject Classification}:}
  Primary\, 60K35  	
  \ Secondary\, 60J80
  \par\medskip
\noindent{\emph{Keywords:} contact process; Bienaymé-Galton-Watson tree; spread of infections; random graphs; branching process.}
\end{abstract}

\section{Introduction}

The contact process on a graph is a popular interacting particle system that describes the spread of  an infection in a structured population. The model is described informally as follows: The structure of the population is encoded by a graph, where vertices represent  individuals that are susceptible to the infection and the edges depict the connections between them. 
Each infected vertex infects each of its neighbours with an infection rate $\lambda > 0$. Moreover, independently each vertex recovers at rate~$1$. The contact process is sometimes also referred to as the susceptible–infected–susceptible (SIS) epidemic model.
	
The process is monotone in the infection rate $\lambda$ and it is therefore natural \nat{to ask} whether there exists a phase transition. 
For an infinite rooted graph, there are two critical values  of interest $0\leq \lambda_1 \leq \lambda_2$, which determine different regimes: in the \textit{extinction} phase, for $\lambda \in (0,\lambda_1)$, the infection becomes extinct in finite time almost surely. In the \textit{weak survival} phase, when  $\lambda\in (\lambda_1, \lambda_2)$, the infection survives forever with positive probability, but the {root} is infected only finitely many times almost surely. Finally, in the \textit{strong survival }phase,  for $\lambda \in (\lambda_2, \infty)$, the infection also survives forever with positive probability, however in this regime the root is infected infinitely many times with positive probability. 
On the other hand, for a finite graph, the infection dies out almost surely in finite time, so the phase transition instead describes for how long the process survives. A typical example would be the transition between a polynomial and an \nat{exponential} long time (expressed in terms of the size of the graph).

The analysis of the contact process is a classic topic, going back to the early results by Harris \cite{harris1974contact}, who showed that, assuming that the process starts with a single infection, there exists a critical value $\lambda_c(\mathbb{Z}^d)\in (0,\infty)$ such that if $\lambda <\lambda_c(\mathbb{Z}^d)$ the process dies out almost surely, whereas if $\lambda >\lambda_c(\mathbb{Z}^d)$ the infection survives with positive probability. Later, Bezuidenhout and Grimmett \cite{bezuidenhout1990critical} showed that the infection dies out at the critical value $\lambda=\lambda_c(\mathbb{Z}^d)$. According to \cite[page 26]{valesin2024}, it is possible to show that $\lambda_c(\mathbb{Z}^d)=\lambda_2$. In other words, if $\lambda\le \lambda_c(\mathbb{Z}^d)$ the process dies out while if $\lambda>\lambda_c(\mathbb{Z}^d)$ the process survives strongly.

See also 
the book of  Liggett \cite{liggett2013stochastic} for a general introduction to the topic.
The same is not true for graphs where neighbourhoods from a fixed vertex grow faster with distance. 
Pemantle \cite{pemantle1992contact} showed for the infinite $d$-ary tree $\mathbb{T}_d$
with $d \geq 3$ that the contact process
satisfies $0<\lambda_1(\mathbb{T}_d)<\lambda_2(\mathbb{T}_d)<\infty$. This result was later extended to the case $d=2$ by Liggett \cite{liggett1996binary}, see also~Stacey \cite{stacey1996existence} for a short proof which works for all $d\geq2$. The contact process has also been studied on certain non-homogeneous classes of graphs. Chatterjee and Durrett \cite{chatterjee2009contact} considered the contact process on  power
law random graphs 
and showed that the infection exhibits long survival for any $\lambda>0$
contradicting mean-field calculations as previously obtained in \cite{pastor2001, pastor2001b}. 

More recently, the question about the existence of a phase transition for the contact process 
on Bienaymé-Galton-Watson (BGW) trees was settled:
Huang and Durrett \cite{huang2018contact} showed that for the contact process on BGW trees 
the critical value for local survival  is $\lambda_2=0$ if the offspring distribution  {$\mathscr{L}(\xi)$} is subexponential, i.e.\ if  $\mathbb{E}[e^{c\xi}]=\infty$ for all $c>0$. Shortly afterwards, Bhamidi et al. \cite{bhamidi2021survival} proved that on BGW trees, $\lambda_1>0$ if the offspring
distribution  {$\mathscr{L}(\xi)$} has an exponential tail, i.e. if $\mathbb{E}[e^{c\xi}]<\infty$ for some $c>0$.  These two results give a complete characterisation on the existence of extinction phase on BGW
trees, while the question of whether $\lambda_1 < \lambda_2$ in general remains open.

A natural generalization of the contact process is to introduce 
inhomogeneities into the graph by associating a random fitness to each vertex that
influences how likely the vertex is to receive and to pass on the infection.
Peterson  \cite{peterson2011contact} introduced the contact process with inhomogeneous weights on the  complete graph so that the infection rate between vertices $i$ and $j$
 is $\lambda \mathcal{F}_i \mathcal{F}_j  / n$, where $\lambda >0$ is a parameter, $n$ is the graph size and $\mathcal{F}_i$ and $\mathcal{F}_j$ are the (random) fitness values 
of vertices $i$ and $j$. 
More precisely, under a second moment assumption on the weights, he proved that there is a phase transition at $\lambda_c>0$ such that for $\lambda<\lambda_c$ the contact process dies out in logarithmic time, and for $\lambda>\lambda_c$ the contact process lives for an exponential amount of time. 
The way that the infection rates were chosen by Peterson  was inspired by inhomogeneous random graphs as introduced by Chung and Lu \cite{Chung2002TheAD}. 
Xue  \cite{xue2015lattice, xue2019lattice} studied the contact process with random vertex weights with bounded support on oriented lattices. In particular, the author  investigates in \cite{xue2015lattice} the asymptotic behaviour of the critical value when the lattice dimension grows. Later, Pan et al. \cite{xue2017regular} extended his result to the case of regular trees.  The reader is also referred to \cite{xue2013regular, xue2016recovery} for further results about the contact process with random weights and bounded support on regular graphs. In all these cases the fitness is bounded 
from above, while  we would also like to consider the case when this does not hold.

In this article, we are interested in understanding the interplay between the inhomogeneous contact process as considered by~\cite{peterson2011contact} with an inhomogeneous graph such as the BGW tree.
We focus on BGW trees, since these can be often used to describe 
the local geometry of random graphs
and standard techniques should apply to translate our results to random graphs.
A natural interest is then to study the phase diagram of this model
and to understand how the extra randomness changes the characterisation of whether
a phase transition occurs or not. A particular challenge is to differentiate the 
effect of having unusually large neighbourhoods versus having a unusually large fitness.

Our first result shows the existence of a phase transition when the distribution of the offspring and the fitness satisfy a certain mixed moment condition. In other words, for $\mathcal{F}$ denoting the fitness associated to a vertex, we prove that if $\mathbb{E}[\mathcal{F}(1+c\mathcal{F})^{\xi}]<\infty$ for some $c>0$, then the survival threshold is strictly positive. The second result tells us that if for some $\vartheta>0$ we have
$	\mathbb{E}\big[(1+\mathcal{F})^{c\xi}\mathbf{1}_{\{\nat{\xi \ge 7}, \, \mathcal{F}\ge \vartheta\}}\big]= \infty$ for all $c>0$,  \nat{then} the process always survives strongly.

In our setting, the fitness has a significant effect on the behaviour of the model as a whole. 
 For instance, if we consider the standard contact process on a BGW tree, where the offspring distribution has exponential tails, then, as mentioned earlier, the process exhibits a phase transition, so in particular the process dies out for small 
 $\lambda$. 
 However, if the  distribution of the fitness values is sufficiently heavy-tailed, then
irrespectively of how light the tails of the offspring distribution are, 
  the process no longer has a phase of extinction. 
 In other words, the presence of fitness guarantees that the infection survives forever with positive probability regardless of the value of $\lambda$.
Conversely, we can also consider the case when the fitness value is a decreasing function of the degrees, so 
that the influence of  high degrees is weakened. A special case of this scenario is the penalised contact process, recently introduced
and analysed in~\cite{zsolt2023}. In this case, the presence of the fitness can induce a phase transition, even if the standard contact 
process would not show one.

\section{Definitions and main results}

In this section, we 
formally introduce our model and state and discuss our main results. 
We consider a random weighted tree $(\mathcal{T}, \mathbb{F}(\mathcal{T}))$. If  $V(\mathcal{T})$
denotes the set of vertices, then we associate to 
each vertex $v \in \mathcal{T}$ a fitness value $\mathcal{F}_v$, so that $\mathbb{F}(\mathcal{T})
= (\mathcal{F}_v)_{v \in V(\mathcal{T})}$.
We assume that the (marginal) distribution of $\mathcal{T}$ is that of a standard Bienaym\'e-Galton-Watson (BGW) tree with a root vertex $\rho$. 
Let $\xi_v$ denote the number of children of $v$, which is the number of neighbours that is further away from the root than $v$ itself.
Let $(\xi,\mathcal{F})$ be a random variable taking values in $\N_0 \times (0,\infty)$, \nat{where we set $\mathbb{N}_0:= \{0,1,\dots\}$}. 
We assume that  $(\xi_v, \mathcal{F}_v)_{v \in V(\mathcal{T})}$ are independent and have the same distribution as $(\xi,\mathcal{F})$, where we allow for correlations between the number of children and the fitness.
Throughout, we will assume that $\mu := \mathbb{E}[\xi] > 1$, so that $\mathcal{T}$ is infinite with positive probability and also that $\mathcal{F} > 0$ almost surely.
We will refer to $(\mathcal{T}, \mathbb{F}(\mathcal{T}))$ constructed in this way as a weighted BGW-tree.

\begin{definition} Given a weighted BGW-tree $(\mathcal{T}, \mathbb{F}(\mathcal{T}))$, the \emph{inhomogeneous contact process} on  $(\mathcal{T}, \mathbb{F}(\mathcal{T}))$ is a continuous-time Markov chain  with state space $\{0,1\}^{V(\mathcal{T})}$, where a vertex is either infected (state 1) or healthy (state 0). We denote the process by 
$$ (X_t)_{t\geq 0} \sim \textnormal{\textbf{CP}}\big(\mathcal{T};{\mathbf{1}_{A}}\big),$$
if ${\mathbf{1}_{A}}$ is the initial configuration, where the vertices in  $A\subset V(\mathcal{T})$ are initially infected. Given $\lambda >0$, the process evolves according to the following rules:
\begin{itemize}
	\item[(i)] For each $v \in V(\mathcal{T})$ such that $X_t(v)=1$, the process $X_t$ becomes $X_t -\mathbf{1}_{v}$ at rate~1.
	\item[(ii)] For each $v\in V(\mathcal{T})$ such that $X_t(v)=0$, the process $X_t$ becomes $X_t + \mathbf{1}_{v}$ at rate 
	$$ \lambda\sum_{v\sim v'} \mathcal{F}_{v'} \mathcal{F}_v X_t(v'),$$
	 where $\mathcal{F}_v$ and $\mathcal{F}_{v'}$ are the fitness values associated to $v$ and $v'$ and $v\sim v'$ means that vertices $v$  and $v'$ are connected by an edge in $\mathcal{T}$. 
\end{itemize}
\end{definition}

By taking $\cF_v = 1$ in the above definition, we recover the definition of the classic contact process on a BGW-tree with offspring distribution $\mathcal{L}(\xi)$, \nat{where $\mathcal{L}(\xi)$ denotes the law of $\xi$.}

\textbf{Notation: } We use the notation \textbf{0} for  the all-healthy state, i.e.\ $\textbf{0}=\mathbf{1}_\emptyset$.  We also often identify any state $\{ 0,1\}^{V(\mathcal{T})}$ with the subset of $V(\mathcal{T})$ consisting of the vertices that have state $1$ (i.e.\ the infected vertices). For example, when we write $\rho \in X_t$, it means that the root of the tree is infected at time $t$.
We use the conditional probability measure $\mathbb{P}_{\mathcal{T}, \mathbb{F}}(\cdot):=\mathbb{P}\big(\cdot | \ \mathcal{T}, \mathbb{F}(\mathcal{T})\big)$ with associated expectation operator  $\mathbb{E}_{\mathcal{T}, \mathbb{F}}[\cdot]:=\mathbb{E}\big[\cdot | \ \mathcal{T}, \mathbb{F}(\mathcal{T})\big]$. For any set $A$, we write $|A|$ to denote its cardinality.

Now, we define the critical values for the infection parameter $\lambda$. Given the tree $(\mathcal{T}, \mathbb{F}(\mathcal{T}))$ \nat{and the process starting with only the root infected}, we define the threshold between  extinction and weak survival by
\begin{equation*}
	\lambda_1(\mathcal{T},\mathbb{F}(\mathcal{T})):= \inf\left\{\lambda: \ \mathbb{P}_{\mathcal{T},\mathbb{F}}\big( X_t\not= \textbf{0} \ \text{for all}\ t\geq 0  \big) >0\right\},
\end{equation*}
and the weak-strong survival threshold by
\begin{equation*}
	\lambda_2 (\mathcal{T},\mathbb{F}(\mathcal{T})):=\inf \big\{\lambda:\   \mathbb{P}^{\{\rho\}}_{\mathcal{T},\mathbb{F}}\big( \mbox{for any } s \geq 0 \mbox{ there exists } t \geq s \, : \, \rho \in X_{t} \big)>0\big\}.
\end{equation*}
By using the same arguments as in Proposition 3.1 in  Pemantle~\cite{pemantle1992contact}, we see that $\lambda_1(\mathcal{T},\mathbb{F}(\mathcal{T}))$ and $\lambda_2(\mathcal{T},\mathbb{F}(\mathcal{T}))$  are constant for almost every $(\mathcal{T},\mathbb{F}(\mathcal{T}))$ conditioned on $|\mathcal{T}|=\infty$.  We let $\lambda_1$ and $\lambda_2$ denote these two constants.

Throughout this paper, we shall suppose that $\mu =\mathbb{E}[\xi]<\infty$. 
We will show in Theorem~\ref{teo_nonexplosion}
that this condition guarantees that the set of infected vertices at every time is finite almost surely.

The main question we want to answer is whether the 
introduction of the fitness changes whether the model exhibits a phase transition or not. 
In Section~\ref{sec_properties} we will see that 
the process is monotone in $\lambda$ as well as in the fitness values $\cF_v$. In particular, this allows us to compare the inhomogeneous process 
to the classical contact process on a BGW tree. 
More precisely, if there \nat{exists} $\vartheta > 0$ such that 
$\mathbb{P}(\mathcal{F} \geq \vartheta) = 1$, then if $\xi$ is such that the standard contact process 
on a BGW tree with offspring distribution $\mathcal{L}(\xi)$ has no phase transition, then 
the same is true for the inhomogeneous process. 
Conversely, if there \nat{exists} $\kappa > 0$ such that 
$\mathbb{P}(\mathcal{F} \leq \kappa) = 1$, then if $\xi$ is such that the standard contact process 
on a BGW tree with offspring distribution $\mathcal{L}(\xi)$ dies out for small $\lambda$, then the same is true for the inhomogenous model.

The following theorem gives a sufficient criterion 
that guarantees the existence of a subcritical phase.

\begin{theorem}\label{teo_exptail}
	Consider the inhomogeneous contact process  on the tree $(\mathcal{T}, \mathbb{F}(\mathcal{T}))$. Suppose that only the root of the tree is initially infected. Assume that 
	\begin{equation}\label{eq:exp_tails}\tag{\bf{A}}
		\mathbb{E}\left[\mathcal{F}(1+c\mathcal{F})^{\xi}\right]<\infty \quad \text{for some}\quad  c>0.
	\end{equation} Then there exists a (deterministic) $\lambda_0> 0$ such that for all $\lambda < \lambda_0$, the process dies out  almost surely. 
\end{theorem}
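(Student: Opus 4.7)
The plan is a first-moment argument that shows, for small enough $\lambda$, the total expected infection activity in the tree is finite, which forces extinction almost surely. Writing $T(v) := \int_0^\infty X_s(v)\,ds$ for the total time $v$ is infected and $K(v)$ for the number of distinct infected periods of $v$, die-out almost surely on the event $\{|\mathcal{T}|=\infty\}$ follows from $\sum_v T(v) < \infty$ a.s., and hence from $\mathbb{E}\bigl[\sum_v T(v)\bigr] < \infty$. Using Fubini together with the rate-$1$ recovery mechanism, $\mathbb{E}_{\mathcal{T},\mathbb{F}}[T(v)]$ and $\mathbb{E}_{\mathcal{T},\mathbb{F}}[K(v)]$ differ by at most $1$, so it suffices to control $\mathbb{E}[K(v)]$.

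The key input is a quenched first-moment recursion. Counting the Poisson arrivals of infection marks directed into $v$ via Campbell's formula in the graphical construction yields
\begin{equation*}
  \mathbb{E}_{\mathcal{T},\mathbb{F}}[K(v)] \leq \mathbf{1}_{\{v = \rho\}} + \lambda\,\mathcal{F}_v \sum_{u \sim v}\mathcal{F}_u\,\mathbb{E}_{\mathcal{T},\mathbb{F}}[K(u)].
\end{equation*}
Introducing the linear operator $(\mathcal{M} f)(v) := \lambda\,\mathcal{F}_v \sum_{u\sim v}\mathcal{F}_u f(u)$, iteration gives $\mathbb{E}_{\mathcal{T},\mathbb{F}}[K(v)] \leq \sum_{k\geq 0}(\mathcal{M}^k \delta_\rho)(v)$; summing over $v$ expands this into a sum over walks $\rho = w_0 \sim w_1 \sim \cdots \sim w_k$ on $\mathcal{T}$, each contributing weight $\lambda^k\mathcal{F}_{w_0}\mathcal{F}_{w_k}\prod_{i=1}^{k-1}\mathcal{F}_{w_i}^2$. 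The next step is to take expectation over the environment, classify walks by their depth-profile (a Dyck-type path in $\mathbb{Z}_{\geq 0}$), and apply a many-to-one / spine decomposition so that each term factors along a single spine path into a product of i.i.d.\ expectations of powers of $\mathcal{F}$ and $\xi\mathcal{F}$.

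The main obstacle is controlling walks that backtrack many times: a vertex visited $m$ times contributes $\mathcal{F}_v^{2m}$ to the weight, and each downward step further incurs a combinatorial factor of $\xi$ when enumerating children. This is exactly where assumption~(\ref{eq:exp_tails}) is essential: the bound $\mathbb{E}[e^{c\xi\mathcal{F}}] \leq M$ yields polynomial moment bounds $\mathbb{E}[(\xi\mathcal{F})^n] \leq n!\,M/c^n$ for every $n$, which, combined with the ballot-type estimate that the number of depth-profile shapes of length $k$ is at most $4^k$, allow one to bound the $k$-th term in the walk expansion by $(\lambda C)^k$ for a constant $C$ depending only on $c$, $M$ and $\mu$. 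Choosing $\lambda_0 := 1/(2C)$ then ensures the geometric series converges whenever $\lambda < \lambda_0$, so that $\mathbb{E}\bigl[\sum_v T(v)\bigr] < \infty$ and the infection dies out almost surely.
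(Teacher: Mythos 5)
Your overall strategy is a first-moment (branching-random-walk) bound: dominate the expected number of infection events by a sum over walks in $\mathcal{T}$ weighted by $\lambda^k$ times products of fitness values, and then argue that the $k$-th term of the walk expansion is at most $(\lambda C)^k$. The reduction of almost-sure extinction to $\mathbb{E}[\sum_v T(v)]<\infty$ and the quenched recursion for $\mathbb{E}_{\mathcal{T},\mathbb{F}}[K(v)]$ are fine, but the final moment bound is false, and this is precisely the obstruction that forces the paper into an entirely different argument.

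To see the problem, take the walk of length $k=2m$ that simply bounces between the root and one of its children, $\rho, u, \rho, u, \dots$. Its weight is $\lambda^{2m}\mathcal{F}_\rho^{2m}\mathcal{F}_u^{2m}$, and summing over the $\xi$ children of $\rho$ and taking the expectation over the environment gives a contribution of order
\begin{equation*}
\lambda^{2m}\,\mu\,\mathbb{E}\big[\mathcal{F}^{2m}\big]^2 \;\asymp\; \lambda^{2m}\,\frac{\big((2m)!\big)^2}{c^{4m}},
\end{equation*}
using exactly the moment bounds $\mathbb{E}[\mathcal{F}^{n}]\leq n!\,M'/c^{n}$ that Assumption~\eqref{eq:exp_tails} provides. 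This diverges super-exponentially in $m$ for every fixed $\lambda>0$, so no choice of $\lambda_0$ makes the series converge. The ballot-type count of at most $4^k$ depth profiles is irrelevant here: the difficulty is not the number of walk shapes but the expected weight of a single backtracking shape, since a vertex visited $m$ times contributes $\mathcal{F}_v^{2m}$ (and, when enumerating children, $\xi^{m}$), whose expectations grow factorially rather than geometrically. The same obstruction already kills the naive first-moment argument for the standard contact process ($\mathcal{F}\equiv 1$) on Galton--Watson trees with unbounded offspring of exponential tail, which is why Bhamidi et al.\ — and this paper following them — do not argue via path counting at all. Instead the paper runs a recursion on the \emph{expected excursion time} of the process on $\mathcal{T}_L$ with a permanently infected parent adjoined to the root (Lemma~\ref{lem_cotaR_L}): the repeated reinfections of a high-degree, high-fitness vertex are absorbed into a geometric-series identity for the excursion length, and the exponential moment of $\xi\mathcal{F}$ enters only through a single bound $\mathbb{E}[\exp(2\lambda D\mathcal{F}_\rho\,\mathbb{E}[\mathcal{F}_\rho S_{L-1}])]$, applied once per generation rather than once per revisit. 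This is then combined with a separate stationary-distribution estimate for a delayed chain (Lemmas~\ref{lem_stationdistr} and~\ref{lem_depth}) showing the infection is unlikely to reach depth $h$, which has no counterpart in your proposal. To repair your approach you would have to truncate the fitness and degree and treat the exceptional vertices by an excursion analysis — which is essentially the paper's proof.
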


Our second main theorem gives a sufficient criterion for the process to always survive strongly, so that there is no
 phase transition.

\begin{theorem}\label{teo_noexptail}
	Consider the inhomogeneous contact process on the tree $(\mathcal{T}, \mathbb{F}(\mathcal{T}))$. Suppose that only the root of the tree is initially infected. Assume that 
 $\mathbb{P}(\xi\ge {\color{black}\kzero})>0$ and for some $\vartheta>0$
 \begin{equation}\tag{\bf{B}}\label{eq:offspringfitness}
   \mathbb{E}\big[(1+\mathcal{F})^{c\xi}\mathbf{1}_{\{{\color{black}\xi\ge \kzero}, \, \mathcal{F}\ge \vartheta\}}\big] = \infty, \qquad \text{for all}\quad  c>0.			
  		\end{equation}
 Then $\lambda_1=\lambda_2=0$, i.e.\ the process survives strongly for any $\lambda>0$.
\end{theorem}

\begin{remark}[Existence of phase transition]
The condition~\eqref{eq:exp_tails} on its own does not imply a phase transition. It is still feasible that if very small fitness values are allowed, then the process dies out for all $\lambda > 0$. 
However, if $\mathcal{F}$ is bounded away from $0$, i.e.\ there exists $\vartheta > 0$ such that $\mathbb{P}(\mathcal{F} \geq \vartheta) = 1$, then 
by comparison with the standard contact process, the inhomogeneous model 
\nat{survives (strongly) for all $\lambda$ sufficiently large,} so that we can deduce the existence of a phase transition in this case. 
\end{remark}

\begin{remark}[Comparison to classical case]
In the special case when $\mathcal{F}$ is constant, 
we recover the results already known in the literature.
More precisely, if w.l.o.g.\ $\mathcal{F}\equiv1$, then 
 Condition \eqref{eq:offspringfitness} reduces to the condition $\mathbb{E}[e^{\tilde{c}\xi}]=\infty$ for all $\tilde{c}>0$, which is known by Huang and Durrett \cite[Theorem 1.4]{huang2018contact} to be a sufficient condition for the lack of a phase transition. 
 Similarly, the condition~\eqref{eq:exp_tails}
 corresponds to 
 $\mathbb{E}[e^{\tilde{c}\xi}]<\infty$ with $\tilde{c} = \log (1+c)$, which is known by 
 Bhamidi et al. \cite[Theorem 1]{bhamidi2021survival} to imply survival for small $\lambda$.
 \end{remark}

\begin{remark}[Effect of the inhomogeneous fitness]
Note that the inhomogeneous fitness can 
either have the effect to prevent a phase transition if particularly large values are allowed or if small values are allowed it can introduce a phase transition 
when the classical model does not exhibit one. 
The following two examples are covered by our results:
\begin{enumerate}
    \item[(i)]
   Assume that  fitness and  offspring distribution are independent and assume that $\xi$ has exponential moments (but $\mathbb{P}(\xi \geq {\color{black}7}) > 0$) and $\mathcal{F}$ has unbounded support and 
satisfies 
\[ \mathbb{E}\big[(1+\mathcal{F})^{c}\mathbf{1}_{\{\mathcal{F}\ge \vartheta\}}\big] = \infty, \qquad \nat{\text{for some } \vartheta>0 \text{ and }} \text{for all }  c>0.\]	Then the standard contact process has a phase transition, while Theorem~\ref{teo_noexptail} guarantees that the inhomogeneous 
contact process is always supercritical. 
This also includes the case of bounded degree graphs, such as a $k$-regular tree with $k \geq {\color{black}7}$. 
Obviously, condition~\eqref{eq:offspringfitness} covers many other examples (including cases when there dependencies between fitness values and offspring numbers).
    \item[(ii)]
    Consider the inhomogeneous contact process with fitness values chosen as $\mathcal{F}=\min\{1, \xi^{-\alpha}\}$ for $\alpha\in (0,\infty)$. This is a particular case of the \textit{degree-penalised contact process} with  product kernel  $\min\{1 , (\xi_u \xi_v)^{-\alpha}\}$ introduced by Bartha et al. \cite{zsolt2023}. Note that in this case, using that $(1+1/x)^x \leq e$  for $x > 0$, we have 
    \begin{equation}\label{eq:penalised}\begin{split}
        \mathbb{E}\left[\mathcal{F}(1+c\mathcal{F})^{\xi}\right] \le \mathbb{E} \left[\xi^{-\alpha} (1+c\,\xi^{-\alpha})^{\xi}\right] \le \mathbb{E}\left[e^{c\xi^{1-\alpha}}\right].
    \end{split}\end{equation}    
    So if the right hand side is finite, then 
     according to Theorem \ref{teo_exptail} the inhomogeneous contact process exhibits a subcritical phase. 
     A first example is when $\alpha\geq 1$, regardless of the distribution of $\xi$. 
     Another example, would 
be the case when $\xi$ has a stretched exponential distribution, i.e.\ $\mathbb{P}(\xi=k) \sim  \exp(-k^{\beta})$ as $k \rightarrow \infty$ for some $\beta \in (0,1)$ and $k\ge 1$. Then, if $\beta \geq 1-\alpha$, the right hand side of~\eqref{eq:penalised} is finite and there is a phase transition, even though the classical contact process has no phase transition.
   
   Unfortunately, note that Theorem \ref{teo_noexptail} does not apply in this setting as the expectation in condition~\eqref{eq:offspringfitness} is always restricted to $\mathcal{F}\geq \vartheta$, which under the assumption on $\mathcal{F}$ means that $\xi$ is restricted to values  bounded from above. As also $\mathcal{F} \leq 1$, this implies that the expectation in~\eqref{eq:offspringfitness} is always finite.
   Bartha et al.~\cite{zsolt2023} have showed 
   results (using different techniques) that provide a much more complete picture in this case. In particular, they proved that the degree-penalised contact process with product kernel $\min\{1 , (\xi_u \xi_v)^{-\alpha}\}$ always exhibits a subcritical phase if $\alpha\ge 1/2$. On the other hand, if $\alpha<1/2$ and the offspring distribution has tails heavier than the stretched-exponential with $\beta=1-2\alpha$, then the contact process  survives strongly for any $\lambda>0$ (see \cite[Theorem 2.1 and 2.2]{zsolt2023}).
\end{enumerate}
\end{remark}

\begin{remark}
Note that there are cases not captured by either of the two conditions 
in Theorem~\ref{teo_exptail} and Theorem~\ref{teo_noexptail}.
For example, 
suppose $\xi$ and $\mathcal{F}$ are chosen independently with distributions  $$
\begin{aligned}\mathbb{P}(\xi =k) & = \eta_1\, e^{-\frac{1}{4}k^2}, \quad  \quad k \in  \mathbb{N},\\ \mathbb{P}(\mathcal{F}\geq f)& =  e^{-\frac{1}{4}(\log (1+f))^2}, \quad \quad  f \geq 0,\end{aligned}$$
where $\eta_1$ is a normalizing constant.
It is not difficult to see that both distributions fulfill neither Assumption {\eqref{eq:exp_tails}} nor {\eqref{eq:offspringfitness}} of  Theorems  \ref{teo_exptail} and \ref{teo_noexptail}. In fact, for  $c>0$, note that
\[\begin{split}f(1+cf)^k \mathbb{P}(\xi= k) \mathbb{P}(\mathcal{F} \geq f) &= \eta_1  f(1+cf)^k e^{-\frac{1}{4}k^2} e^{-\frac{1}{4}(\log (1+f)^2}\\ & \geq  \eta_1   c^k f^{k+1}  e^{-\frac{1}{4}k^2} e^{-\frac{1}{4}(\log (1+f))^2}. \end{split}\]
Taking $k=\lfloor \log (1+f)\rfloor$ the right-hand side of the previous expression goes to $\infty$ as $f\to \infty$, so $\mathbb{E}[\mathcal{F} (1+c\mathcal{F})^{\xi}]$ cannot be finite as
$$\mathbb{E}\big[\mathcal{F} (1+c\mathcal{F})^{\xi}\big] \geq \mathbb{E}\big[f(1+cf)^{\xi}\big]\mathbb{P}(\mathcal{F}\geq f) \geq f(1+cf)^k \mathbb{P}(\xi= k) \mathbb{P}(\mathcal{F} \geq f).$$
Furthermore, Condition \eqref{eq:offspringfitness} is also not satisfied, as
	\begin{equation*}
	 \limsup_{\substack{f+k\to \infty\\ \, \nat{k\ge \kzero}, \,  f\ge \vartheta}} \frac{\log \big(\mathbb{P}(\xi=k) \mathbb{P}(\mathcal{F}  \geq f)\big)}{k\log (1+f)}= 	 \limsup_{\substack{f+k\to \infty\\ \, \nat{k\ge \kzero}, \,  f\ge \vartheta}} \frac{\log (\eta_1)}{k \log (1+f)}-\frac{1}{4}\frac{k}{\log (1+f)} -\frac{1}{4} \frac{\log (1+f)}{k} = -\frac{1}{2},
\end{equation*}
for any $\vartheta>0$, by Lemma~\ref{lem:equivcond} below.

In general, 
it is not clear what  the correct moment condition is to give a complete characterization of 
when a phase transition exists. We believe that this problem requires different techniques to those developed here and is currently on-going research. 
\end{remark}

{\bf Main ideas of proofs.} 
In this section, we give a short overview of the proofs of the main theorems before giving the full proofs. 

The proof of Theorem \ref{teo_exptail} is 
 an adaptation of a simplified version of the proof from Bhamidi et al. \cite{bhamidi2021survival}. See also the  lecture notes by Daniel Valesin \cite{valesin2024}, where the simplification is presented for the standard contact process. First, we use a recursive analysis on BGW trees that allows us to control the expected survival times. To this end, we consider the contact process on  the finite tree $\mathcal{T}_L$ which corresponds to the restriction of $\mathcal{T}$ to the first $L$ generations. The first goal is to show that, for  small enough $\lambda$, the expected survival time of $ \text{\textbf{CP}}(\mathcal{T}_L; \mathbf{1}_{\rho})$ is bounded from above uniformly in $L$. As in~\cite{bhamidi2021survival}, we use a coupling, where
we add an extra vertex only adjoined to the root that is always infected. 
In this way, the process on the subtrees rooted in the children of the root
can by independence be compared to the full process on a tree (with extra root) 
restricted to $L-1$ vertices (see Lemma \ref{lem_stationdistr} below).
The recursion gives a bound on the expected survival time on $\mathcal{T}_L$ that is uniform in $L$. Thus, using monotone convergence 
we get a bound on the expected survival time 
on the full tree which immediately implies that the inhomogeneous contact process dies out almost surely.

The proof of Theorem \ref{teo_noexptail} is based on  techniques developed by Pemantle in \cite{pemantle1992contact}. He used these techniques to show an upper bound for the threshold value $\lambda_2$ for the contact process with constant fitness defined on BGW trees with constant offspring
number, but also for those with an stretched exponential distribution.
More recently, this strategy  was extended by Durrett and Huang in~\cite[Theorem 1.4]{huang2018contact} in order to show that $\lambda_2=0$ if $\mathscr{L}(\xi)$ has subexponential tails. The same overall strategy holds in our case, but we need to take into account the effect of a possibly large fitness and thus large transmission rates.

First we estimate the survival time for the contact process with fitness on a finite star. For the case with constant fitness, Berger et al. in \cite{berger2005spread} showed that for a star of size $d$, there are universal constants $C>0$ and $c>0$ such that if $d$ is larger than $C/\lambda^2$, then the infection survives on the star for a time at least $e^{c\lambda^2d}$ with high probability.

Now, in the case with fitness  we prove that, if the root of the star is initially infected,  it will keep a large number of infected leaves for a long time with probability very close to 1 if the fitness or the size of the star is large enough (see Lemma \ref{lem_star}). 
The inhomogeneous contact process on a star, where the center has fitness larger than $f$ and the leaves have constant fitness equal to 1, can be lower bounded by a standard contact process where the infection rate is $\lambda f$. 
In particular, we need to take extra care to get the right estimates that also hold if only \nat{$\lambda$}$f$ (i.e.\ the effective transmission rate) is large, but for example the size of star is small.
With the right estimates on the star at hand, we study the contact process in a star where a path is added to some \nat{leaf} of the star (see  Lemma \ref{lem_starchain})
and look at the probability to pass the infection from the start to the end vertex of that path.

The overall strategy  
for the proof of Theorem \ref{teo_noexptail}, where we need  to show that the root is infected at  large times, 
is as follows: We first assume that the root 
is a star, i.e.\ it has either a sufficient large number of neighbours and/or  large fitness. 
Then, the above start lemmas allows us to push the 
infection sufficiently deep into the tree, where 
we can find another star, where the infection 
survives for a long time, so that the infection 
can be brought back to the root at the end.

{\bf Overview of structure.} The remaining paper is structured as follows. In Section \ref{sec_properties}, we introduce the graphical representation for the contact process
and we state further useful properties. In particular, we will also see that started with a single infected vertex, the set of infected vertices stays finite at all times. Section~\ref{sec_prooftheoexptails} is devoted to the proof of Theorem~\ref{teo_exptail}. In Section \ref{sec_finitestars}, we prove preliminary results for the contact process with fitness on finite stars. Finally, in Section \ref{sec_prooftheononexptails} the proof of Theorem~\ref{teo_noexptail} is completed.

\section{Properties of the inhomogeneous contact process}\label{sec_properties}
In this section we provide an equivalent description of our model by a convenient \textit{graphical representation} based on the construction given in \cite[Chapter 1]{liggett2013stochastic} for the case of constant fitness.  We point out some properties which are direct consequences of the  construction. Further, we show that under the assumption $\mathbb{E}[\xi]<\infty$, the contact process does not explode in finite time almost surely, so that the set of infected vertices at any time is finite almost surely.

Recall that $V(\mathcal{T})$ denotes the set of vertices in $\mathcal{T}$. Denote by $E(\mathcal{T})$ the set of  undirected edges. 
Conditionally on  $(\mathcal{T},\mathbb{F}(\mathcal{T}))$, let $\{N_{v}\}_{v\in V(\mathcal{T})}$ be i.i.d.\ Poisson (point) processes with rate 1 and $\{N_{(v,u)}\}_{(v,u)\in E(\mathcal{T})}$ i.i.d.\ Poisson processes with rate $\lambda\mathcal{F}_{v}\mathcal{F}_{u}$, where all the Poisson processes are mutually independent. 
Then, given any initial condition, the contact process can be defined as follows: an infected vertex $v$ infects another vertex $u$ at  a time $t$ if $t$ is in $N_{(v,u)}$. Similarly, an infected vertex recovers at any time that is in $N_{v}$.

One advantage of the graphical construction is that it provides a joint coupling of the processes with different infection rules or different initial states. We state two useful facts about the contact process that we will use later in our proofs and that are a consequence of using the  graphical representation  and the fact that we can e.g.\ easily couple Poisson processes with different rates.

\begin{itemize}
	\item\textit{Monotonicity in the infection rates. } Let $\mathbb{F}_1(\mathcal{T})=(\mathcal{F}_v^1)_{v\in V(\mathcal{T})}$ and $\mathbb{F}_2(\mathcal{T})=(\mathcal{F}_v^2)_{v\in V(\mathcal{T})}$ be sequences of fitness values such that $\mathcal{F}_v^1\leq \mathcal{F}_v^2$ a.s. for all $v\in V(\mathcal{T})$. Let  $(X_t^1)\sim \text{\textbf{CP}}((\mathcal{T}, \mathbb{F}_1(\mathcal{T})); \mathbf{1}_{A})$ and $(X_t^2)\sim \text{\textbf{CP}}((\mathcal{T}, \mathbb{F}_2(\mathcal{T})); \mathbf{1}_{A})$ starting from the same initially infected set $A\subset V(\mathcal{T})$. Then, we can couple both processes such that for any $t\geq 0$, we have $X_t^1\leq X_t^2$, i.e.\ 
	$$ X_t^1(v)\leq X_t^2(v), \quad \text{for all}\quad v\in V(\mathcal{T}),$$ 
	(see e.g. \cite[Section 1.2]{peterson2011contact} for the case of the contact process with fitness in a completely deterministic graph).
	\item Consider $(X_t^1)\sim \text{\textbf{CP}}(\mathcal{T}; {\mathbf{1}_{A}})$ started from any initially infected set $A\subset V(\mathcal{T})$. Let $\textbf{I}$ be any subset of $[0,\infty)$. Define $(X_t^2)$ to be a process  that has the same initial state, infection and recoveries as $(X_t^1)$, except that the recoveries at a fixed vertex $v\in V(\mathcal{T})$ are ignored at times $t\in \textbf{I}$. Then, we can couple both processes such that for all $t\geq 0$ we have $X_t^1\leq X_t^2$, i.e.
	$$ X_t^1(v)\leq X_t^2(v), \quad \text{for all}\quad v\in V(\mathcal{T}),$$ 
	(see for instance \cite[Lemma 2.2]{bhamidi2021survival} for the case  of the contact process with constant fitness).
\end{itemize}

As a next result we show that if we start with a finite configuration, then almost surely 
the configuration remains finite for all times. Our argument adapts the proof of  Durrett \cite[Theorem 2.1]{durret1995notes} to our setting with the additional difficulty that the underlying graph is random and we have unbounded rates.

Let $r$ be an arbitrary non-negative integer. Let $V_r$ denote the set of vertices in generation~$r$ in the tree $\mathcal{T}$, i.e.\
\begin{equation}\label{eq_Vr}
	V_r=\{v\in V(\mathcal{T}): d(\rho, v)=r\},
\end{equation}
where $d(\cdot,\cdot)$  is the graph distance between two vertices in the tree. Let $t_0$  be a positive number. Define the graph $G_{t_0}$, a spanning subgraph of the BGW tree $\mathcal{T}$,  by saying  that two vertices $u,v$ in $V(\mathcal{T})$ with $d(\rho, u) < d(\rho,v)$ that are neighbours in $\mathcal{T}$
	are also neighbours in $G_{t_0}$ 
	if 
	\[ 
	N_{(u,v)}\big([0,t_0]\big) \geq 1. \]
	Let  $\mathscr{C}_{t_0}(v)$ denote the vertex set of the connected component of $v$ in the graph $G_{t_0}$.
We will first show that if $t_0$ is small enough, then the component sizes in $G_{t_0}$ are finite almost surely.
Then, we will make the connection to the contact process as follows: If we are starting with only the root infected, then by the graphical construction and the fact that we are working on a tree, every vertex that has been infected by time  $t_0$ in the contact process is contained in $\mathscr{C}_{t_0}(\rho)$.

\begin{lemma}\label{lem_finitet0}
Assume that $\mu = \mathbb{E}[\xi]<\infty$. 
If $t_0$ is small enough, then for any finite $\ell \in \N$, 
	\begin{equation*}\label{eq:Compt0}
\mathbb{P}\bigg(\bigg|\bigcup_{v\in \bigcup_{r=0}^{\ell}V_r}\mathscr{C}_{t_0}(v)\bigg|<\infty\bigg)=1.
\end{equation*}
\end{lemma}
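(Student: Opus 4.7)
The plan is to reformulate the statement as a percolation question on $\mathcal{T}$ and reduce it to a summability bound via a many-to-one moment computation. Conditionally on the fitness values, $G_{t_0}$ is obtained from $\mathcal{T}$ by keeping each parent-child edge $\{u,v\}$ independently with probability $1-e^{-\lambda t_0 \mathcal{F}_u \mathcal{F}_v}$. For $u\in V(\mathcal{T})$, let $\mathscr{D}(u)$ be the set of descendants of $u$ (including $u$ itself) joined to $u$ by a path of edges of $G_{t_0}$. Because $\mathcal{T}$ is a tree, each component of $G_{t_0}$ is a subtree, so if $\tau(v)$ denotes the minimum-depth vertex of $\mathscr{C}_{t_0}(v)$, one has $\mathscr{C}_{t_0}(v)=\mathscr{D}(\tau(v))$ and $d(\rho,\tau(v))\leq d(\rho,v)$. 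Hence
\[
\bigcup_{v\in\bigcup_{r=0}^\ell V_r}\mathscr{C}_{t_0}(v)\;\subset\;\bigcup_{u\in\bigcup_{r=0}^\ell V_r}\mathscr{D}(u),
\]
and it suffices to show the right-hand side is a.s.\ finite, for which I would bound its expected cardinality.

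A standard Galton-Watson many-to-one argument, using that the fitnesses are i.i.d.\ and independent of the tree, then yields
\[
\mathbb{E}\Big[\sum_{u\in V_r}|\mathscr{D}(u)|\Big]\;=\;\mu^r\sum_{k\geq 0}\mu^k\, q_k,\qquad q_k:=\mathbb{E}\Big[\prod_{i=0}^{k-1}\bigl(1-e^{-\lambda t_0\mathcal{F}_i\mathcal{F}_{i+1}}\bigr)\Big],
\]
where $(\mathcal{F}_i)_{i\geq 0}$ are i.i.d.\ copies of the fitness: the expected number of pairs $(u,w)$ with $u\in V_r$ and $w$ a depth-$k$ descendant of $u$ is $\mu^{r+k}$, and given such a pair the probability (conditional on the tree) that the $k$-step path $u\to w$ survives in $G_{t_0}$ is exactly $q_k$.

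The step I expect to be the main obstacle is controlling $q_k$, because the lemma assumes nothing about the fitness beyond $\mathcal{F}\geq 1$; the naive bound $1-e^{-x}\leq x$ produces a factor $\mathbb{E}[\mathcal{F}^2]^{k-1}$ that can be infinite. I would circumvent this by a decoupling trick: the factors in $q_k$ at even indices $i$ involve the pairwise disjoint pairs $(\mathcal{F}_{2j},\mathcal{F}_{2j+1})$, so they are mutually independent. Since every factor is at most $1$, dropping the odd-indexed ones yields
\[
q_k\;\leq\;\mathbb{E}\Big[\prod_{j=0}^{\lceil k/2\rceil-1}\bigl(1-e^{-\lambda t_0\mathcal{F}_{2j}\mathcal{F}_{2j+1}}\bigr)\Big]=\pi(t_0)^{\lceil k/2\rceil},\qquad \pi(t_0):=\mathbb{E}\big[1-e^{-\lambda t_0\mathcal{F}\mathcal{F}'}\big],
\]
with $\mathcal{F},\mathcal{F}'$ i.i.d. The integrand is bounded by $1$ and tends pointwise to $0$, so dominated convergence gives $\pi(t_0)\to 0$ as $t_0\to 0$. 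Choosing $t_0$ small enough that $\mu^2\pi(t_0)<1$ therefore makes $\sum_k \mu^k q_k$ a convergent geometric-type series. Summing the resulting bound over $r=0,\dots,\ell$ provides a finite upper bound for the expected cardinality of the union in the first display, which must then be a.s.\ finite, finishing the proof.
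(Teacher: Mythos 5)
Your argument is correct, and it shares with the paper the one essential idea needed to handle unbounded fitness without moment assumptions: since consecutive edges of a path share a fitness variable, you decouple by restricting attention to the disjoint pairs $(\mathcal{F}_{2j},\mathcal{F}_{2j+1})$, which are independent, and discard the remaining factors (each bounded by $1$). The surrounding architecture, however, is genuinely different. The paper fixes the root (and then each $v\in V_\ell$), bounds $\mathbb{P}(\mathscr{C}_{t_0}(\rho)\cap V_r\neq\emptyset)$ by a union bound over paths of length $r$, and must then separately control $|V_r|\leq(2\mu)^r$ via Markov's inequality and invoke a binomial large-deviation estimate to guarantee that every path contains at least $\tfrac12\lceil r/2\rceil$ pairs with $\mathcal{F}_{2i}\mathcal{F}_{2i+1}\leq k$; summability plus Borel--Cantelli then gives finiteness. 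You instead compute the expected total size of $\bigcup_u\mathscr{D}(u)$ by a many-to-one identity, reducing everything to the convergence of $\sum_k\mu^k q_k$ with $q_k\leq\pi(t_0)^{\lceil k/2\rceil}$ and $\pi(t_0)\to0$ by dominated convergence. This first-moment route is shorter and avoids both the truncation event $E_r$ and the large-deviation bound, at the cost of requiring the reduction $\mathscr{C}_{t_0}(v)=\mathscr{D}(\tau(v))$ with $\tau(v)\in\bigcup_{r\leq\ell}V_r$ (which you justify correctly: a connected subgraph of a tree is a subtree rooted at its minimal-depth vertex, and components can only move up at most $\ell$ levels). The paper's probability-decay estimate is quantitatively stronger in one respect --- it shows the component reaches depth $r$ with probability at most $3\cdot 2^{-r}$ --- but for the purposes of this lemma and of Theorem~\ref{teo_nonexplosion} your expectation bound suffices, since the iteration over time intervals $[nt_0,(n+1)t_0]$ only uses almost-sure finiteness.
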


\begin{proof}	We start by showing that $|\mathscr{C}_{t_0}(\rho)| < \infty$  almost surely.
For this result, by Borel-Cantelli, it suffices to show that
  \begin{equation}\label{eq:summable} \sum_{r=0}^{\infty}\mathbb{P}\big(\mathscr{C}_{t_0}(\rho)\cap V_r \not= \emptyset\big)<\infty.\end{equation}
We begin by denoting  the set of non-backtracking paths of length $r$ in the tree $(\mathcal{T}, \mathbb{F}(\mathcal{T}))$ started at the root as follows
$$ C_r = \{v=(v_0, \dots, v_r): v_i \in V_i, \ i \in \{0,\dots, r\} \nat{\ \text{and}\ v_i\sim v_{i+1},\ i\in \{0,\dots,r-1\}}\}.$$
Note that with this notation $v_0=\rho$ and the number of paths of length  $r$ corresponds to the number of vertices in generation $r$, that is, $|C_r|=|V_r|$.

Let  $c=(v_0, \dots, v_r) \in C_r$ be a path of length $r$ in $(\mathcal{T}, \mathbb{F}(\mathcal{T}))$. For each $i\in \{0,\dots, r\}$, by definition of $G_{t_0}$, we have that
the probability that the vertices $v_{i-1}$ and $v_i$ are connected  in $G_{t_0}$
is
 $$ 1- \exp\left(-\lambda \mathcal{F}_{v_{i-1}}\mathcal{F}_{v_{i}} t_0\right).$$
By monotonicty, we may assume that $r = 2k$ for some $k \in \mathbb{N}_0$.
Thus, conditioning on $(\mathcal{T}, \mathbb{F}(\mathcal{T}))$,  the probability that there is a vertex in generation $r$ that is also in $\mathscr{C}_{t_0}(\rho)$ 
is
\[\begin{split}
\mathbb{P}_{\mathcal{T},\mathbb{F}}\big(\mathscr{C}_{t_0}(\rho)\cap V_r \not=\emptyset\big) &\leq \sum_{v \in C_r}  \prod_{i=1}^{r}\Big(1- \exp\left(-\lambda \mathcal{F}_{v_{i-1}}\mathcal{F}_{v_{i}}t_0\right)\Big) \\
& \leq \sum_{v \in C_r}  \prod_{i=0}^{k-1}\Big(1- \exp\left(-\lambda \mathcal{F}_{v_{2i}}\mathcal{F}_{v_{2i+1}}t_0\right)\Big).
\end{split}\]
Denote by $\mathbb{T}_{\leq r}$ and $\mathbb{F}_{\leq r}$ the $\sigma$-algebras generated by the BGW tree and the fitness values
up to generation $r$ respectively.
Conditioning on $\mathbb{F}_{\le 2k-3}$ and $\mathbb{T}_{\le 2(k-1)}$ and using the independence structure  of the weighted BGW 
\[\begin{aligned}  \E\big[ \mathbb{P}_{\mathcal{T},\mathbb{F}} & \big(\mathscr{C}_{t_0}(\rho)\cap V_r \not=\emptyset\big) \, |\, \mathbb{F}_{\le 2k -3},\mathbb{T}_{\le 2k-2}\big] \\
& \leq  \sum_{v \in C_{2k-2}}  \prod_{i=1}^{k-2}\Big(1- \exp\left(-\lambda \mathcal{F}_{v_{i-1}}\mathcal{F}_{v_{i}}t_0\right)\Big)
\E\Big[ 
\sum_{w \in C_2}  \Big(1- \exp\left(-\lambda \mathcal{F}_{w_0}\mathcal{F}_{w_1}t_0\right)\Big) \Big] , 
\end{aligned}\]
where $w = (w_0,w_1,w_2)$.
Iterating this procedure gives
\[ \mathbb{P}\big(\mathscr{C}_{t_0}(\rho)\cap V_r \not=\emptyset\big) \leq \E\Big[ 
\sum_{w \in C_2}  \Big(1- \exp\left(-\lambda \mathcal{F}_{w_0}\mathcal{F}_{w_1}t_0\right)\Big) \Big]^k . \]
As the summand in the last expectation is bounded by $1$ and $\mathbb{E}[ |C_2|] \leq \E[ \xi]^2 < \infty$, we
can deduce by dominated convergence that we can choose $t_0$ small enough such that
\[ \E\Big[ 
\sum_{w \in C_2}  \Big(1- \exp\left(-\lambda \mathcal{F}_{w_0}\mathcal{F}_{w_1}t_0\right)\Big) \Big] < 1. \]
Thus, 
$\mathbb{P}\big(\mathscr{C}_{t_0}(\rho)\cap V_{2k} \not=\emptyset\big)$ is summable in $k$. 
Together with the \nat{monotonicity} of the process this implies~\eqref{eq:summable}. 
It follows by the Borel-Cantelli lemma that
 almost surely for $r$ sufficiently large
 $\mathscr{C}_{t_0}(\rho) \cap V_r = \emptyset$ with probability 1, 
 so that $|\mathscr{C}_{t_0}(\rho)|$ is finite almost surely. 
 
 The same argument shows that 
almost surely for each $v \in V_\ell$, we have that 
$\mathscr{C}_{t_0}(v)$ restricted to the vertices in the subtree of $\mathcal{T}$
rooted at $v$ is finite. This immediately implies the statement of the lemma, 
as $\bigcup_{r = 0}^\ell V_r$ is finite almost surely.
\end{proof}

\begin{theorem}\label{teo_nonexplosion}
	Assume that $\mu = \mathbb{E}[\xi]<\infty$. Consider the inhomogeneous contact process $(X_t)\sim \normalfont{\textbf{CP}}(\mathcal{T}, \mathbf{1}_A)$ on $(\mathcal{T},\mathbb{F}(\mathcal{T}))$
	started with a finite set $A \subset V(\mathcal{T})$ initially infected. Then
	$$ \mathbb{P}\big(|X_t|<\infty, \ \forall t\geq 0\big) = 1.$$
\end{theorem}

\begin{proof}
	Let $t_0$ be as given in Lemma~\ref{lem_finitet0}. 
	Since the initial configuration of the contact process is finite, we can find a sufficiently large and finite $k$ such that $A \subset \bigcup_{r = 0}^k V_r$, so that 
	by the graphical construction we have that
 $X_t\subset \cup_{v\in\cup_{r=0}^k V_r} \mathscr{C}_{t_0}(v)$  for all $t\in [0,t_0] $. Then, according to Lemma \ref{lem_finitet0}, we have that
	$$ \mathbb{P}\big(|X_t|<\infty, \ \forall t\in [0,t_0]\big)=1.$$
	Let us now define a spanning subgraph $G_{2t_0}$ of $\mathcal{T}$ as follows:
	two neighbouring vertices $u, v$ in $\mathcal{T}$ with $d(\rho,u) < d(\rho,v)$ are connected if
	$$ 
	  N_{(u,v)}\big([t_0,2t_0]\big)\geq 1.$$
	  As before, let $\mathscr{C}_{t_0}(\rho)$ be  the connected component of  $\rho$ in the graph $G_{t_0}$. For each $v\in \mathscr{C}_{t_0}(\rho)$, we denote by $\mathscr{C}_{2t_0}(v)$ the connected component of $v$ in the graph $G_{2t_0}$. We observe that  by the graphical construction $$X_t\subset \bigcup_{v\in \mathscr{C}_{t_0}(\rho)}\mathscr{C}_{2t_0}(v), \quad \quad  \text{for all}\quad  t\in [t_0,2t_0].$$
 Then we can argue that by the independence of $G_{t_0}$ and $G_{2t_0}$,
and appealing again to Lemma \ref{lem_finitet0}, that the right-hand side is finite a.s., which implies
$$ \mathbb{P}\big(|X_t|<\infty, \ \forall t\in [t_0,2t_0]\big)=1.$$
Iterating the argument we can conclude the proof.
\end{proof}

\section{Proof of Theorem \ref{teo_exptail}}\label{sec_prooftheoexptails}

This section is devoted to proving Theorem \ref{teo_exptail}. 
The general strategy follows the arguments used in  Bhamidi et al. \cite[Theorem 1]{bhamidi2021survival}, 
although  the presence of fitness leads to significant changes.
Moreover, we do not adapt their arguments directly, but follow a simplification of the arguments that was suggested to us by Daniel Valesin (see also his \nat{book} \cite{valesin2024}).

First we set up some extra  notation for this section. Let $D$ denote the degree of the root $\rho$ and let $v_1, \dots,v_D$ be the children of $\rho$ with fitness $\mathcal{F}_{v_1},\dots, \mathcal{F}_{v_D}$.
We denote by $\mathcal{T}_L$ the 
tree obtained by  restricting the tree $\mathcal{T}$ to the first $L$ generations. Unless specified otherwise, we suppress the weights in our notation, but implicitly inherit the weights of the vertices from the original weighted tree  $(\mathcal{T}, \mathbb{F}(\mathcal{T}))$.
For $i = 1, \ldots, D$, let $\mathcal{T}_{L-1}^{v_i}$ 
be the subtree of $\mathcal{T}_L$ consisting of all descendants of $v_i$ (including $v_i$)  rooted in $v_i$.
Denote by $\mathcal{T}_L^+$ the 
tree $\mathcal{T}_L$, but with an extra parent  $\rho^+$ attached to the vertex $\rho$. 
We will assume 
that $\rho^+$ 
has constant fitness, i.e.\ $\mathcal{F}_{\rho^+}=1$.  
Moreover, we denote by $\mathcal{T}_{L-1}^{+, v_i}$ the subtree of $\mathcal{T}_L$ consisting of $\rho$ and all the descendants of $v_i$ (including $v_i$ itself).

Given any weighted tree $T$,  a subset $A$ of vertices of $T$ and a vertex $v$ in $T$, we denote by 
$\mathbf{CP}_v(T;\mathbf{1}_A)$ the inhomogeneous contact process that follows the same dynamics as $\mathbf{CP}(T;\mathbf{1}_A)$, but where vertex $v$ is treated as permanently infected.
Since the state of the vertex $v$ never changes, we will throughout specify the state of $\text{\textbf{CP}}_{v}(T; \mathbf{1}_A)$ by specifying the state restricted to $T \setminus\{v\}$, so that 
${\normalfont{\textbf{0}}}$ denotes the state where the only infected vertex is $v$.
If the initial condition is irrelevant, we drop the notation $\mathbf{1}_A$.

\begin{lemma}\label{lem_stationdistr}
Suppose that $\xi$ and $\cF_\rho$ satisfy Assumption~\eqref{eq:exp_tails}, i.e.\ there exists $c >0$
such that $M = \E [ \mathcal{F} (1 + c \mathcal{F})^{\xi}] < \infty$.
Let $R_L$ be the first time when ${\normalfont{\text{\textbf{CP}}(\mathcal{T}_L; \mathbf{1}_{\rho})}}$
	 reaches state {\normalfont{\textbf{0}}}. Then there exists a constant $\lambda_0>0$ 
	  such that for any $\lambda\leq \lambda_0$ and $L$, 
$$\mathbb{E} \big[\mathcal{F}_{\rho}R_L\big] \leq M.$$
\end{lemma}

\begin{proof}
	 We denote by $(\widetilde{X}_t) \sim \widetilde{\textbf{CP}}_{\rho^+}(\mathcal{T}_L^+; \mathbf{1}_{\rho})$  the following modification  of the contact process $\textbf{CP}_{\rho^+}(\mathcal{T}_L^+; \mathbf{1}_{\rho})$. The process $(\widetilde{X}_t)$ shares the same infection and recovery clocks as $(X_t)$ except in the root $\rho$. A recovery attempt at $\rho$ at time $t$ is only valid if $\widetilde{X}_t = \mathbf{1}_{\rho}$, 
	so when there are no other infected vertices apart from $\rho$ and $\rho^+$.
	 Let $S_L$ and $\widetilde{S}_L$ be the first excursion time when the  process  $\textbf{CP}_{\rho^+}(\mathcal{T}_L^+; \mathbf{1}_{\rho})$ and the modified process  $\widetilde{\textbf{CP}}_{\rho^+}(\mathcal{T}_L^+; \mathbf{1}_{\rho})$ reach state \textbf{0}, respectively.  
 An excursion to \textbf{0} of  $(\widetilde{X}_t) \sim \widetilde{\textbf{CP}}_{\rho^+}(\mathcal{T}_L^+; \mathbf{1}_{\rho})$ started from the initial configuration $\widetilde{X}_0 = \mathbf{1}_{\rho}$ can be described as follows:
	\begin{itemize}
		\item Possibility 1. We terminate if $\rho$ recovers before infecting any of its children. The probability that $\rho$ recovers before infecting any of its children conditionally on  $(\mathcal{T}_L, \mathbb{F}(\mathcal{T}_L))$ is given by
		\begin{equation*}
			p:= \frac{1}{1 +  \lambda \mathcal{F}_{\rho}\sum_{j=1}^{D}\mathcal{F}_{v_j}}.
		\end{equation*}
		Furthermore,  the expected waiting time to see the recovery of $\rho$ is  
		\[ \frac{1}{1 +  \lambda \mathcal{F}_\rho \sum_{j=1}^{D}\mathcal{F}_{v_j}}=p,\] conditionally on the fitness and on the event that the recovery happens first.
		\item Possibility 2. The root $\rho$ infects any of its children,  say $v_i$, before the recovery of~$\rho$.  
  The vertex $v_i$ is selected with probability proportional to the  fitness of the root's children, i.e.\ with probability 
		\begin{equation*}
\frac{ \mathcal{F}_{v_i}}{\sum_{j=1}^{D}\mathcal{F}_{v_j}}.
		\end{equation*}
        {\color{black}Let $S_i$ be the first time that $\mathbf{CP}_\rho(\mathcal{T}_L,\mathbf{1}_{v_i})$
        reaches the all-healthy state on $\cup_{i=1}^D \mathcal{T}_{L-1}^{v_i}$.} Then the  expected waiting time until all subtrees of the root have recovered is $\mathbb{E}_{\mathcal{T}_L, \mathbb{F}}[S_i]$, conditionally on vertex $v_i$ being infected first. When this excursion finishes, we come back to the initial state and follow either possibility 1 or 2.
	\end{itemize}
Note that we have reached state $\mathbf{0}$ as soon as the process follows possibility 1.
Hence, by splitting according to the number of times the process follows possibility 2 before finally taking possibility 1,
we obtain
the expected excursion time to \textbf{0} of $(\widetilde{X}_t)$, given the tree $({\mathcal{T}_L, \mathbb{F}(\mathcal{T}_L)})$,
as
	\begin{eqnarray*}
		\mathbb{E}_{\mathcal{T}_L, \mathbb{F}}\Big[\widetilde{S}_L\Big]  &=& \sum_{k=0}^{\infty} p \left(1-p \right)^k \left[(k+1) p +  \frac{k }{ \sum_{i=1}^{D}  \mathcal{F}_{v_i}} \sum_{j=1}^D \mathcal{F}_{v_j}\mathbb{E}_{\mathcal{T}_L, \mathbb{F}}\Big[S_j\Big] \right]\\ &=&  \sum_{k=0}^{\infty} p \left(1-p \right)^k \left[(k+1) p +  \frac{k D}{ \sum_{i=1}^{D}  \mathcal{F}_{v_i}} \mathbb{E}_{\mathcal{T}_L, \mathbb{F}}\Big[S\Big] \right],
	\end{eqnarray*}
	where in the last equality we use the 
    notation 
    \begin{equation}\label{eq_Sthetatilde}
		S= \frac{1}{D} \sum_{i=1}^D  \mathcal{F}_{v_i} S_i, 
	\end{equation} 
     where we assume that the different $S_i$ are coupled via the same graphical construction on the same underlying tree but have different initial conditions (but in fact only the marginal distributions matter, as we are only interested in the expectation of $S$). 
    Calculating  the series in the  previous display explicitly, we obtain
	\begin{equation}\label{eq_Stheta}
		\mathbb{E}_{\mathcal{T}_L, \mathbb{F}}\left[\widetilde{S}_L \right] = 1 +  \lambda D \mathcal{F}_{\rho} \mathbb{E}_{\mathcal{T}_L, \mathbb{F}}\left[S\right].
	\end{equation} 
	
	Now,  we shall find a {\color{black}recursive} upper bound for $\mathbb{E}_{\mathcal{T}_L, \mathbb{F}}[\widetilde{S}_L] $ 
    by establishing a relationship between $ \mathbb{E}_{\mathcal{T}_L, \mathbb{F}}[S] $ and the stationary distribution of the contact process $\textbf{CP}_{\rho}(\mathcal{T}_L)$. Denote by  $S_{L-1}^{i}$ the first time when \nat{$\textbf{CP}_{\rho}(\mathcal{T}_{L-1}^{+, v_i}; \mathbf{1}_{v_i})$} 
    becomes \textbf{0} on $\mathcal{T}_{L-1}^{v_i}$. For 
\nat{$\textbf{CP}_\rho(\mathcal{T}^{+,v_i}_{L-1})$}, the rate of leaving the state \textbf{0} and the expected return time to \textbf{0} are given by
	\begin{equation*}
		q_i(\textbf{0}) := \lambda \mathcal{F}_{\rho} \mathcal{F}_{v_i}  \quad  \quad \text{and}\quad \quad  m_i({\textbf{0}}) := \big(q_i(\textbf{0})\big)^{-1} + \mathbb{E}_{\mathcal{T}_{L-1}^{+,v_i}, \mathbb{F}}\Big[S_{L-1}^{i}\Big].
	\end{equation*}
 Similarly, for the process  $\textbf{CP}_{\rho}(\mathcal{T}_L)$  we also find these two quantities 
\[\begin{split}
		q(\textbf{0}) := \lambda \mathcal{F}_{\rho}  \sum_{i=1}^{D}  \mathcal{F}_{v_i}\quad \text{and}\quad m({\textbf{0}}) := \frac{1}{q(\textbf{0})} + \sum_{i=1}^{D} \frac{\lambda \mathcal{F}_\rho \mathcal{F}_{v_i}}{q(\textbf{0})} \mathbb{E}_{\mathcal{T}_L, \mathbb{F}}\Big[S_i\Big],
	\end{split}
\]
where we recall that  $S_i$ is the first time when   {$\textbf{CP}_{\rho}(\mathcal{T}_L; \mathbf{1}_{v_i})$ reaches the completely recovered state \textbf{0}} 
on $\cup_{i=1}^D \mathcal{T}_{L-1}^{v_i}$.
Let $\widetilde{\nu}_{\mathcal{T}_L}$ and $\nu_{\mathcal{T}_{v_i}}$ be the stationary distribution of $\textbf{CP}_{\rho}(\mathcal{T}_L)$ and $\textbf{CP}_{\rho}(\mathcal{T}_{L-1}^{+,v_i}),$ respectively. Therefore, we have 
	\begin{equation*}
		\widetilde{\nu}_{\mathcal{T}_L}(\textbf{0}) = \frac{1}{q(\textbf{0})m(\textbf{0})}= \frac{1}{1+\lambda D \mathcal{F}_{\rho} \mathbb{E}_{\mathcal{T}_L, \mathbb{F}}\left[S \right]}.
	\end{equation*}
Note {\color{black}also in $\textbf{CP}_{\rho}(\mathcal{T}_L)$, the process restricted to each subtree $\mathcal{T}_{L-1}^{v_i}$ evolves independently, }so that $\widetilde{\nu}_{\mathcal{T}_L}: = \otimes_{j=1}^D \nu_{\mathcal{T}_{v_j}}$, which yields
 	\begin{equation}\label{eq_nuthetanucruz}
		\widetilde{\nu}_{\mathcal{T}_L}(\textbf{0})=\frac{1}{1+\lambda D \mathcal{F}_{\rho} \mathbb{E}_{\mathcal{T}_L, \mathbb{F}}\left[S \right]} = \prod_{i=1}^{D}\frac{1}{1+\lambda \mathcal{F}_{\rho} \mathcal{F}_{v_i}\mathbb{E}_{\mathcal{T}_{L-1}^{+,v_i}, \mathbb{F}}\Big[S^{i}_{L-1} \Big]}.
	\end{equation}
Using this fact, we obtain from \eqref{eq_Stheta} that
	\begin{eqnarray*}
		\mathbb{E}_{\mathcal{T}_L, \mathbb{F}}\left[\widetilde{S}_L \right] = 1 +  \lambda D \mathcal{F}_{\rho} \mathbb{E}_{\mathcal{T}_L, \mathbb{F}}\left[S\right] =   \prod_{i=1}^{D} \left(1+\lambda \mathcal{F}_{\rho} \mathcal{F}_{v_i}\mathbb{E}_{\mathcal{T}_{L-1}^{+,v_i}, \mathbb{F}}\Big[S_{L-1}^{i} \Big]\right).
	\end{eqnarray*}
Denote by $\widetilde S_{L-1}^{i}$ the first time when
\nat{$\widetilde{\textbf{CP}}_{\rho}(\mathcal{T}^{+,v_i}_{L-1}; \mathbf{1}_{v_i})$} reaches \textbf{0}, {\color{black}where this process is defined as $\textbf{CP}_{\rho}(\mathcal{T}^{+,v_i}_{L-1}; \mathbf{1}_{v_i})$ but $v_i$ is only allowed to recover if the subtree of all its children are all healthy.} By using the monotonicity of the contact process, we have $S_{L-1}^{i} \leq \widetilde S_{L-1}^{i}$. Now, taking expectations conditionally only on $D$ and $\mathcal{F}_\rho$ and using that  $\widetilde S_{L-1}^{i}$ does not depend on $\mathcal{F}_\rho$ by construction, we get 
\[\begin{split} \mathbb{E}\left[\widetilde{S}_L \mid D,\mathcal{F}_{\rho}\right] 
	&\leq  \mathbb{E}\left[\prod_{i=1}^{D}\left(1+ \lambda \mathcal{F}_{\rho} \mathbb{E}\Big[\mathcal{F}_{v_i}\widetilde S_{L-1}^{i}| \mathcal{T}_{v_i},  \mathbb{F}(\mathcal{T}_{v_i})\Big]\right)\Big|\Big.D, \mathcal{F}_{\rho}\right] \\ &= \prod_{i=1}^{D}\left(1+ \lambda \mathcal{F}_{\rho}  \mathbb{E}\Big[\mathcal{F}_{v_i}\widetilde S_{L-1}^{i}\Big]\right).
\end{split}\]

Moreover, we have
\[\begin{split} \mathbb{E}\left[\widetilde{S}_L \mid D,\mathcal{F}_{\rho}\right] = \prod_{i=1}^{D}\left(1+ \lambda \mathcal{F}_{\rho}  \mathbb{E}\Big[\mathcal{F}_{v_i
}\widetilde S_{L-1}^i\Big]\right) =  \left(1+\lambda \mathcal{F}_{\rho} \mathbb{E}\left[\mathcal{F}_{\rho}\widetilde S_{L-1} \right]\right)^D.
\end{split}\]
Thus taking expectations over $D$ and $\mathcal{F}_\rho$, we obtain
\begin{equation}\label{eq_FStheta}
	\mathbb{E}\big[\mathcal{F}_\rho \widetilde S_L \big] \leq  \mathbb{E}\left[\mathcal{F}_\rho \left(1+\lambda \mathcal{F}_{\rho} \mathbb{E}\left[\mathcal{F}_{\rho}\widetilde S_{L-1}\right]\right)^D\right].
	\end{equation}
 
	Now, we shall apply an inductive argument over $L$ to bound $ \mathbb{E}\left[\mathcal{F}_\rho \widetilde S_L \right]$. We assume that  $\mathbb{E}[\mathcal{F}_{\rho}(1+c\mathcal{F}_\rho)^{\xi}]=M<\infty$ for some $c>0$, and define the following constants 
	$$ K=  M\max\left\{\frac{1}{c}, 1\right\},\quad \lambda_0 = \frac{1}{2K}.$$
	For the base case $L=0$, note that $\widetilde S_0$ is an exponential random variable with parameter $1$
 that is independent of $\cF_\rho$. Therefore, 
	$$ \mathbb{E}\big[\mathcal{F}_\rho \widetilde S_0\big]
= \mathbb{E} \big[\mathcal{F}\big] 	 \mathbb{E} \big[\widetilde S_0\big] \leq M,$$
	so that the base case holds.
	Next, we assume $\mathbb{E}[\mathcal{F}_\rho \widetilde S_{L-1}]\leq M$. Then  we have for any $\lambda \le \lambda_0$ that
	\begin{equation*}
		\frac{\lambda \mathbb{E}\left[\mathcal{F}_{\rho}\widetilde S_{L-1} \right]}{c} \leq \frac{ \lambda M  }{c}  \leq  \frac{M}{2 c K} = \frac{1}{2 c \max\{1/c,1\}}  \leq 1.
	\end{equation*}
	Hence,  since $g(x)=\mathcal{F}_{\rho}(1+x\mathcal{F}_\rho)^{D}$ for $x>0$ is an increasing function, 
 we have 
 for any $L$ and all $\lambda\leq \lambda_0$ 
	\begin{equation}
		\begin{split}
		\mathbb{E}\big[\mathcal{F}_\rho \widetilde S_L \big] &\leq   \mathbb{E}\Big[g\big(\lambda \mathbb{E}[\mathcal{F}_\rho \widetilde S_{L-1}]\big)\Big]\leq  \mathbb{E} [ g(c) ] =   \mathbb{E}\big[\mathcal{F}_{\rho}(1+c\mathcal{F}_\rho)^{D}\big] = M,
		\end{split}
	\end{equation}
 where the first inequality follows from \eqref{eq_FStheta}. Moreover,  taking into account the monotonicity of the contact process, we have $S_L  \leq \widetilde{S}_L $ which yields, for any $L$ and all $\lambda\leq \lambda_0$,
 \begin{equation}\label{eq:boundS}
		\begin{split}
		\mathbb{E} \big[\mathcal{F}_\rho R_L\big] \le \mathbb{E}\big[\mathcal{F}_\rho S_L \big] \leq 	\mathbb{E}\big[\mathcal{F}_\rho \widetilde S_L \big] \leq  M,
		\end{split}
 \end{equation}
as required.
\end{proof}

We are now ready to move to the proof of the main theorem of this section.

\begin{proof}[Proof of Theorem \ref{teo_exptail}]
We consider $\lambda_0$ as defined in Lemma \ref{lem_stationdistr}. Let $\lambda\in (0,\lambda_0]$. Let $R$ denote the first time when $ \text{\textbf{CP}}(\mathcal{T}; \mathbf{1}_{\rho})$ reaches state \textbf{0}. Then by \nat{the} monotone convergence theorem and Lemmas \ref{lem_stationdistr}, we obtain 
   \[\mathbb{E}[\mathcal{F}_{\rho}R] = \mathbb{E}\left[\lim_{L\to \infty} \mathcal{F}_{\rho}R_L\right] = \lim_{L\to \infty} \mathbb{E}[\mathcal{F}_{\rho}R_L] \le M.\]
   Now since $\mathcal{F}_{\rho}R$ is a non-negative random variable, we have that   $\mathcal{F}_{\rho}R<\infty$ almost surely.
   Therefore 
   \[\mathbb{P}\big(X_t \not= \textbf{0}\ \text{for all}\ t \geq 0\big)  = \mathbb{P}(R=\infty) = \mathbb{P}(\mathcal{F}_{\rho} R=\infty)=0,\]
   where in the second equality we have used that $\mathcal{F}_\rho > 0$ almost surely. We conclude that 
for all $\lambda \leq \lambda_0$
the process $(X_t)\sim \text{\textbf{CP}}(\mathcal{T}; \mathbf{1}_{\rho})$ dies out almost surely.
\end{proof}

\section{Finite stars}\label{sec_finitestars}

In this section, we show some results for the inhomogeneous contact
process on stars which will be used in the proof of Theorem \ref{teo_noexptail} in the next section. Although any finite graph is eventually trapped in the state of zero infection, the stars are able to maintain the infection for a long time. Here we will show that, if  the root of the star is initially infected, the star will keep a large number of infected leaves for a long time with probability very close to 1. 

Some of our results in this section are inspired by results obtained by Huang and Durrett in \cite[Section 2]{huang2019exponential}. However, we had to adapt their arguments to take advantage of the fact that we can have a large fitness value associated to the root of the star. Throughout this section, we assume that the random variable $\mathcal{F}$ takes values in $[\vartheta,\infty)$ for some $\vartheta>0$.

We start this section by proving a lower bound for the probability to transfer the infection from one vertex to another in a graph consisting of a single path conditionally on the associated fitness values.

\begin{lemma}\label{lem_chain}
	Let $r$ be an arbitrary non-negative integer and $f\geq \vartheta$ a  real number. Let  $\mathcal{C}_r$ be a graph consisting of a single path of length $r$ on the vertices $v_0,\dots, v_r$ with  associated fitness values $\mathbb{F}(\mathcal{C}_r):=\{\mathcal{F}_{v_0},\dots, \mathcal{F}_{v_r}\}$. Consider $(X_t)\sim {\normalfont{\text{\textbf{CP}}(\mathcal{C}_r; \mathbf{1}_{v_0})}}$ the contact process on $\mathcal{C}_r$ where $v_0$ is initially infected. Then there exists a constant $\gamma>0$ such that
	\begin{equation*}
		\mathbb{P}_{\mathbb{F}}\bigg(v_r \in  \bigcup_{s\leq 2r} X_{s} \bigg)=	\mathbb{P}\bigg(v_r \in \bigcup_{s\leq 2r} X_{s}\ |\ \mathbb{F}(\mathcal{C}_r), v_{0}\in X_0 \bigg)\geq  \big(1-e^{-\gamma r}\big) \prod_{i=1}^{r} \frac{\lambda \mathcal{F}_{v_{i-1}}\mathcal{F}_{v_i}}{1+\lambda \mathcal{F}_{v_{i-1}}\mathcal{F}_{v_i}}.
	\end{equation*}
Moreover,  on the event that $\{ \mathcal{F}_{v_0}\geq f, \mathcal{F}_{v_r}\geq f\} $ we have that
		\[\begin{split}
			\mathbb{P}\Big(v_r \in \bigcup_{s\leq 2r}  X_{s}\ | & \ \mathbb{F}(\mathcal{C}_r), v_{0}\in X_0 \Big) \geq \big(1-e^{-\gamma r})  C_{\lambda, f} \left(\frac{\lambda\vartheta^2}{\lambda\vartheta^2+1}\right)^r,
		\end{split}\]
		where
		\begin{equation}\label{eq_ctelf}
			C_{\lambda,f}:= \left(\frac{\lambda\vartheta^2+1 }{\lambda\vartheta^2}\right)^{2}\left(\frac{\lambda f   \vartheta }{1 + \lambda f   \vartheta}\right)^2.
	\end{equation}
\end{lemma}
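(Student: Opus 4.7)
My plan is to work in the graphical representation of the contact process, restrict attention to a single forward chain of transmissions, and lower-bound the target probability by the probability that this restricted chain succeeds within time $2r$. Conditionally on $\mathbb{F}(\mathcal{C}_r)$, on each edge $(v_{i-1},v_i)$ there is an independent Poisson process of infection arrows at rate $\lambda \mathcal{F}_{v_{i-1}}\mathcal{F}_{v_i}$, and at each vertex an independent recovery process at rate $1$. I would define $\tau_0 = 0$ and, for $i = 1,\ldots,r$, let $\tau_i$ be the time of the first infection arrow on $(v_{i-1},v_i)$ strictly after $\tau_{i-1}$, and call step $i$ \emph{successful} if no recovery at $v_{i-1}$ falls in $(\tau_{i-1},\tau_i]$. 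On the event $A$ that every step is successful and $\tau_r \leq 2r$, the graphical construction shows that $v_r \in X_{2r}$, giving a coupled lower bound for the true process.

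By the strong Markov property applied at each $\tau_{i-1}$, together with the independence of the Poisson clocks at distinct vertices and edges, the success indicators at different steps are independent, and the competition between an $\mathrm{Exp}(\lambda \mathcal{F}_{v_{i-1}}\mathcal{F}_{v_i})$ arrival and an $\mathrm{Exp}(1)$ arrival yields
$$ p_i := \mathbb{P}_{\mathbb{F}}(\text{step } i \text{ succeeds}) = \frac{\lambda \mathcal{F}_{v_{i-1}}\mathcal{F}_{v_i}}{1 + \lambda \mathcal{F}_{v_{i-1}}\mathcal{F}_{v_i}}, $$
while conditional on success, $\tau_i - \tau_{i-1}$ is exponential with rate $1 + \lambda \mathcal{F}_{v_{i-1}}\mathcal{F}_{v_i} \geq 1$. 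Hence, conditional on all $r$ steps succeeding, $\tau_r$ is stochastically dominated by a sum of $r$ i.i.d.\ $\mathrm{Exp}(1)$ random variables, and a standard Cram\'er--Chernoff estimate (e.g.\ with $\gamma = 1-\log 2 > 0$) gives $\mathbb{P}_{\mathbb{F}}(\tau_r > 2r \mid A) \leq e^{-\gamma r}$. Combining these,
$$ \mathbb{P}_{\mathbb{F}}(v_r \in X_{2r}) \;\geq\; \mathbb{P}_{\mathbb{F}}(A)(1-e^{-\gamma r}) = (1-e^{-\gamma r})\prod_{i=1}^r \frac{\lambda \mathcal{F}_{v_{i-1}}\mathcal{F}_{v_i}}{1+\lambda \mathcal{F}_{v_{i-1}}\mathcal{F}_{v_i}}, $$
which is the first assertion.

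For the second assertion, since $x \mapsto x/(1+x)$ is increasing on $[0,\infty)$ and $\mathcal{F}_v \geq 1$ almost surely, each interior factor ($i \in \{2,\ldots,r-1\}$) is at least $\lambda/(1+\lambda)$, while on the event $\{\mathcal{F}_{v_0}\geq f, \mathcal{F}_{v_r}\geq f\}$ the two boundary factors (at $i=1$ and $i=r$) are each at least $\lambda f/(1+\lambda f)$. Multiplying,
$$ \prod_{i=1}^r \frac{\lambda \mathcal{F}_{v_{i-1}}\mathcal{F}_{v_i}}{1+\lambda \mathcal{F}_{v_{i-1}}\mathcal{F}_{v_i}} \;\geq\; \left(\frac{\lambda f}{1+\lambda f}\right)^2 \left(\frac{\lambda}{1+\lambda}\right)^{r-2} = C_{\lambda,f}\left(\frac{\lambda}{1+\lambda}\right)^r. $$
This lower bound is deterministic on the conditioning event, so taking the conditional expectation of the first assertion yields the claimed bound. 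The main piece of care is in the first paragraph: justifying that the greedy forward-chain construction on the graphical representation produces genuinely independent per-step competitions (so that $\mathbb{P}_{\mathbb{F}}(A) = \prod p_i$) and gives a true coupled lower bound for the contact process. Both points are standard once one writes the stopping-time construction out explicitly, but the bookkeeping is where any slip in the argument is most likely to occur.
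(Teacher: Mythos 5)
Your proposal is correct and follows essentially the same route as the paper: a greedy forward-relay of the infection along the path, with the per-step race between the rate-$\lambda\mathcal{F}_{v_{i-1}}\mathcal{F}_{v_i}$ infection clock and the rate-$1$ recovery clock giving the product $\prod_{i=1}^r \frac{\lambda \mathcal{F}_{v_{i-1}}\mathcal{F}_{v_i}}{1+\lambda \mathcal{F}_{v_{i-1}}\mathcal{F}_{v_i}}$, exponential domination of the step durations by i.i.d.\ $\mathrm{Exp}(1)$ variables plus a Chernoff bound for the time constraint, and the same boundary-versus-interior factor split for the second assertion. The only blemish is notational: you define $A$ to already include $\{\tau_r \leq 2r\}$ but then condition on $A$ when bounding $\mathbb{P}(\tau_r > 2r \mid A)$; the intended reading (condition only on all steps succeeding) is clear and matches the paper's decomposition $\mathbb{P}(B)\,\mathbb{P}(T\leq 2r \mid B)$.
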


\begin{proof}
	We emphasize here that the notation $	\mathbb{P}_{\mathbb{F}}\big(\cdot \big)$ corresponds to the conditional probability on the fitness and also that we start with $v_0$ initially infected. Let $r$ be an arbitrary non-negative integer.  First, we need to establish some appropriate notation. We define the sequence of times $(s_i)_{i\geq 0}$ by setting  $s_0=0$ and for $i\in  \{1, \dots, r\}$ defining
	\begin{equation*}
		s_i:= \inf \{s\geq s_{i-1}: \ v_{i-1}\ \text{recovers or  infects}\ v_i \mbox{ at time } s\}.
	\end{equation*}
Denote  $T=\sum_{i=1}^{r}t_i$\   where \ $t_i:=s_i - s_{i-1}$.  Also denote  the events $$
B_i:=
\{ v_{i-1} \text{ infects } v_i \text{ before recovering}\}
\quad \quad  
\text{and}\quad \quad B=\bigcap_{i=1}^rB_i.$$ 
We begin by noting that from the definition of the event $B$ and the definition of $T$ we can obtain the following lower bound  
	\begin{eqnarray}\label{eq_proba_lemmapath}
		\mathbb{P}_{\mathbb{F}}\bigg(v_r \in \bigcup_{s\leq 2r} X_{s} \bigg)  &\geq &  \mathbb{P}_{\mathbb{F}}\big(B\cap \left\{ T \leq 2r\right\}\big)= \mathbb{P}_{\mathbb{F}}\big(\left. T\leq 2r\, \right| \, B\big)\mathbb{P}_{\mathbb{F}}(B).
	\end{eqnarray}
	Now, we shall establish lower bounds for the two  probabilities on the right-hand side above. 
	Conditioning on the fitness and also on the event $\{v_{i-1}\in X_{s_{i-1}}\}$, we know that the probability that $v_{i-1}$ infects $v_i$ before recovering
	is given by
	\begin{equation*}
		\mathbb{P}_{\{\mathcal{F}_{v_{i-1}}, \mathcal{F}_{v_i}\}}\big(B_i \ |\  v_{i-1}\in X_{s_{i-1}} \big)=  \frac{\lambda \mathcal{F}_{v_{i-1}}\mathcal{F}_{v_i}}{1+\lambda \mathcal{F}_{v_{i-1}}\mathcal{F}_{v_i}}.
	\end{equation*}
where here we denote by $\mathbb{P}_{\{\mathcal{F}_{v_{i-1}}, \mathcal{F}_{v_i}\}}(\cdot)$  the conditional probability $\mathbb{P}(\cdot \ |\{\mathcal{F}_{v_{i-1}}, \mathcal{F}_{v_i}\} )$.
 This implies that
	\begin{equation}\label{eq_lemmapath2}
		\mathbb{P}_{\mathbb{F}}(B) = \prod_{i=1}^{r} 	\mathbb{P}_{\{\mathcal{F}_{v_{i-1}}, \mathcal{F}_{v_i}\}}\big(B_i \ |\  v_{i-1}\in X_{s_{i-1}} \big) = \prod_{i=1}^{r} \frac{\lambda \mathcal{F}_{v_{i-1}}\mathcal{F}_{v_i}}{1+\lambda \mathcal{F}_{v_{i-1}}\mathcal{F}_{v_i}}.
	\end{equation}

On the other hand, by an application of  Markov's inequality and by the definition of $B$ and $T$, for  any $\theta >0$, 
\[ \begin{aligned} \mathbb{P}_\mathbb{F}\big(T\geq 2r\ | \ B \big) &=  \mathbb{P}_\mathbb{F}\big(\left. e^{\theta T} \geq e^{2\theta r}\ \right|  B \big)\nonumber \leq e^{-2\theta r}\mathbb{E}_{ \mathbb{F}}\big[e^{\theta T}\ | \ B\big] \\ 
	&\leq e^{-2\theta r}\prod_{i=1}^{r}\mathbb{E}_{\{\mathcal{F}_{v_{i-1}}, \mathcal{F}_{v_i}\}}\Big[e^{\theta t_i}\ \Big|\Big. \ B_i\cap\{v_{i-1}\in X_{s_{i-1}}\}\Big]
\end{aligned} \]
 By conditioning on the event $B_i\cap\{v_{i-1}\in X_{s_{i-1}}\}$ we obtain that $t_i$ has an exponential distribution with parameter $(1+\lambda \mathcal{F}_{v_{i-1}} \mathcal{F}_{v_i})$. Therefore, we can couple $t_i$ with a random variable $\tau_i$ with an exponential distribution with parameter $1$ such that $t_i \leq \tau_i$ almost surely. 
Then, following the standard argument for a large deviation bound, we obtain that 
\[ \mathbb{P}_\mathbb{F}\big(T\geq 2r\ | \ B \big) \leq e^{-2 r \theta^* + r \log \phi(\theta^*)}, \]
where $\phi(\theta^*) = \E[ e^{\theta^* \tau_i}]$. 
Now, note that 
\[ \lim\limits_{\theta^* \to 0}\frac{\log(\phi_i(\theta^*))}{\theta^*} = \E[ \tau_i] = 1. \]
Therefore, by choosing $\theta^* > 0$ small enough, we can deduce that there exists $\gamma > 0$ such that  
 	\begin{eqnarray}\label{eq_lemmapath3}
		\mathbb{P}_\mathbb{F}\big(T\geq 2r\ | \ B \big) &\leq & e^{-r \gamma}.
	\end{eqnarray}
Plugging \eqref{eq_lemmapath2} and \eqref{eq_lemmapath3} back into \eqref{eq_proba_lemmapath}, we now see that 
	\begin{equation*}
		\mathbb{P}_{\mathbb{F}}\bigg(v_r \in \bigcup_{s\leq 2r} X_{s} \bigg)\geq  \big(1-e^{-\gamma  r}\big) \prod_{i=1}^{r} \frac{\lambda \mathcal{F}_{v_{i-1}}\mathcal{F}_{v_i}}{1+\lambda \mathcal{F}_{v_{i-1}}\mathcal{F}_{v_i}}.
	\end{equation*}

For the second part, we fix  $f\geq \vartheta$. Then, on the event $\{\mathcal{F}_{v_0} \geq f,\mathcal{F}_{v_r}\geq f\}$
and using that $\cF_{v_i} \geq \vartheta$, we obtain 
	\begin{equation*}
\prod_{i=1}^{r} \frac{\lambda \mathcal{F}_{v_{i-1}}\mathcal{F}_{v_i}}{1+\lambda \mathcal{F}_{v_{i-1}}\mathcal{F}_{v_i}} \geq \left(\frac{\lambda\vartheta^2}{1+\lambda\vartheta^2}\right)^{r-2}\left(\frac{\lambda f   \vartheta}{1+\lambda f   \vartheta}\right)^2,
	\end{equation*}
which yields the desired result.
\end{proof}

Let $G_k$ be a star of size $k$, that is, $G_k$ consists of a root $\rho$ and $k$ other vertices each connected only to $\rho$,  denoted by $v_1,\dots, v_k$.  Let $(X_t)\sim {\normalfont{\text{\textbf{CP}}(G_k; \mathbf{1}_{\rho})}}$ denote the inhomogeneous contact process on $G_k$ with the root initially infected.   Define $(X_t^0)$ to be the modification of $(X_t)$  in such a way that the fitness values for  the leaves satisfy $\mathcal{F}_{v_1}=\dots = \mathcal{F}_{v_k}=\vartheta$ and the fitness of the root is $\mathcal{F}_{\rho}=f \geq \vartheta$. Note that the contact process  $(X_t^0)$  may be considered as a contact process without fitness and with rate parameter $\tilde{\lambda}:=\lambda f\vartheta$. Now, on the event $\{\mathcal{F}_\rho \geq f\}$  and taking into account that the random variable $\mathcal{F}$ takes values in $[\vartheta,\infty)$, we have 
\begin{equation*}
	\lambda  f \vartheta \leq \lambda \mathcal{F}_{v_j} \mathcal{F}_\rho \quad \quad \text{for every} \quad  j=1,\dots, k.
\end{equation*}
Thus by monotonicity of the contact process in the rate parameter, as discussed in Section~\ref{sec_properties}, we have $X_t^0\subset X_t$ on the event $\{\mathcal{F}_\rho \geq f\}$.
Moreover, on a star the dynamics of $(X_t^0)$ are the same as that of a standard contact process, 
but where the infection rate is $\lambda f\vartheta$. In particular, we
can use some of the results in Section 2 of Huang and Durrett \cite{huang2018contact} (see also  \cite[ Lemma 2.2]{chatterjee2009contact}) describing 
the persistence of the infection on a star. Some of the results can be taken over directly, however, 
others need to be adapted so that we can make full use of the fact that 
we have the extra flexibility of making $f$ large enough.

When considering the process $(X_t^0)$, write the state of the star as $(m,n)$ where $m$ is the number of infected leaves and $n=0$ or $1$ if the center is healthy or infected, respectively.
Throughout, we will write $\Lambda_t^0 \subset X_t^0$ for the set of infected leaves. Also, we will write $\mathbb{P}_{(m,n)}$ if we are conditioning on $X_0^0 = (m,n)$.

As in~\cite{huang2018contact, chatterjee2009contact}, we will reduce  the dynamics to
a one-dimensional chain, by concentrating on  the first coordinate (i.e.\ we will count the number of infected leaves). 
As a first step, we will ignore the times when the centre is not infected and as a second step we will stop 
the dynamics when we reach a certain level $L$ of infected leaves. 
We can then define a suitable 
time-homogeneous Markov chain that lower bounds the number of infected leaves (running on a clock ignoring 
times when the centre is not infected).

To deal with the number of leaves that recover while the root is not infected, we note that
when the state is $(m,0)$ for some $m>0$, the next event will occur after an exponential time with mean $1/(m\lambda f \vartheta + m)$. The probability that the root is reinfected first is  $\lambda f \vartheta / (\lambda f \vartheta + 1)$.  Denote by $\mathfrak{N}$ the number of infected leaves  that will recover while the center is healthy. Thus $\mathfrak{N}$ has a shifted geometric distribution with success probability $\lambda f \vartheta / (\lambda f \vartheta +1)$, i.e. 
\begin{equation}\label{eq_rvN}
		\mathbb{P}\big(\mathfrak{N}=j\big) = \left(\frac{1}{\lambda f \vartheta+1}\right)^{j}\left(\frac{\lambda f \vartheta}{\lambda f \vartheta +1}\right), \quad j\geq 0.
	\end{equation} 

Fix $\lambda>0, k\geq 1$ and $f\geq \vartheta$ and set a cut-off level 
\begin{equation}\label{eq:defL}
L=\left\lceil\frac{\lambda f \vartheta k }{1+2\lambda f \vartheta}\right\rceil.
\end{equation}
If we  modify the chain so that the infection rate is 0 when the number of infected leaves is $\geq L$, 
then we can couple the number of infected leaves to a process $(Y_t)_{t \geq 0}$
with the following dynamics
\begin{equation}\label{eq:Y}
\begin{array}{lcc}
	\text{jump} & & \text{at rate} \\ 
	Y_t \to Y_t -1, &    & L \\
	Y_t \to \min\{Y_t +1, L\}, &   & \lambda f  \vartheta (k-L) \\
	Y_t \to Y_t -\mathfrak{N},  &    & 1,
\end{array}	
\end{equation}
so that the process $(Y_t)_{t\ge 0}$ stays below the number of infected leaves (ignoring times when the centre is not infected) as long as the original process has not hit the state $(0,0)$ yet. 
For convenience, we do not stop the process after hitting a state below $0$ and instead we are careful to apply 
the coupling only up to the hitting of $(0,0)$.

	The following lemma shows that $|\Lambda_t^0|$ hits level $L$ before the process dies out with high probability. 
	Also, we show that the first time $(Y_t)$ hits $L$ has small expectation. 
Our result is similar to \cite[Lemma 2.5]{huang2018contact} and \cite[Lemma 2.3]{chatterjee2009contact}, however we need to adapt their arguments to give useful estimates also for large fitness.

To formalize these statements, denote for the original chain for any $A \geq 0$,
\[ T_A = \inf\{t\geq 0 \, :\, |\Lambda_t^0| \geq A \}, \quad 
T_{0,0} = \inf\{ t \geq 0 \, : \, X_t^0 = (0,0) \}. \]
Moreover, for $(Y_t)$ define for $A \geq 0$, 
 $$  T_A^{Y} = \inf\{t\geq 0: Y_t\geq A \} \qquad \text{and}\qquad R_0^{Y} = \inf\{t\geq T_1^{Y}: Y_t \leq 0\} .$$

In the following lemma, we will also consider the embedded discrete time process $(Z_n)$ of $(Y_t)$ obtained by looking at $Y_t$ only at its jump times. This process has the property  that for either $f$ or $k$ large enough  $((1+\lambda f  \vartheta /2)^{{-Z_n}})$ is a supermartingale while ${Z_n}\in (0,L).$ The proof follows from similar arguments as those used in \cite[Lemma 2.1]{huang2018contact}. 
\nat{
\begin{lemma}\label{lem:supermtg}
Let $\lambda>0$, $f\ge \vartheta$ and $k\geq 7$. For either $f$ or $k$ large enough  $((1+\lambda f  \vartheta /2)^{{-Z_n}})$ is a supermartingale while ${Z_n}\in (0,L).$  
\end{lemma}}

\begin{proof}
Define $e^\theta= (1+\lambda f\vartheta/2)^{-1}$ and assume that $Z_n\in (0,L)$. After {\color{black}the same straightforward calculations as in \cite[Lemma 2.1]{huang2018contact} (but with infection rate $\lambda f$),} we obtain with $D= L+\lambda f\vartheta (k-L) +1 \ge 0$, for $y>0$, {\color{black}
\begin{equation*}
\begin{aligned}
    \mathbb{E}[e^{\theta Z_{n+1}} - e^{\theta Z_n} \mid Z_n=y] & = \frac{e^{\theta y}}{D} \left((e^{-\theta}-1)L + (e^\theta -1)\lambda f\vartheta (k-L)  + \frac{e^{-\theta}-1}{1+\lambda f\vartheta - e^{-\theta}}\right) \\
    & = \frac{e^{\theta y}}{D} \left(\frac{\lambda f \vartheta}{2}L - \frac{\lambda f \vartheta}{2+\lambda f \vartheta }\lambda f\vartheta (k-L)  + 1\right) ,
    \end{aligned}
\end{equation*}
where we used the definition of $\theta$.
It remains to show that the term in the bracket on the right hand side is negative. We write using the definition of $L$ above 
\[ \begin{aligned} \frac{\lambda f \vartheta}{2}L & - \frac{\lambda f \vartheta}{2+\lambda f \vartheta }\lambda f\vartheta (k-L)  + 1 \\
& = \lambda f \vartheta\Big( \frac{2+3\lambda f \vartheta}{2(2+\lambda f \vartheta)}L - \frac{\lambda f \vartheta}{2 + \lambda f \vartheta} k \Big) + 1 \\
& \leq \lambda f \vartheta\bigg( \frac{2+3\lambda f \vartheta}{2(2+\lambda f \vartheta)}\Big(\frac{\lambda f \vartheta k}{1+2\lambda f \vartheta}+1\Big)  - \frac{\lambda f \vartheta}{2 + \lambda f \vartheta} k \bigg) + 1 \\
& = - \frac{(\lambda f \vartheta)^3}{2(2+ \lambda f \vartheta)(1+2\lambda f \vartheta)} k 
+ \lambda f \vartheta \frac{2+3\lambda f \vartheta}{2(2+\lambda f \vartheta)}  + 1 .
\end{aligned}\]
Therefore, by calculating the asymptotics, we can deduce that the right hand side is negative, if 
either $k \geq 7$ and $f$ is large or if $k$ is large. }
\end{proof}

\begin{lemma}\label{lem_cotasTKT0}
\nat{Let $\lambda>0$, {\color{black}$k\geq 7$} and $f\geq   \vartheta$.} Consider the stochastic process $(Y_t)$ defined in \eqref{eq:Y} and the contact process $(X_t^0)$. Then for either $f$   or $k$ large enough, 
	\begin{equation*}
		\mathbb{P}_{(0,1)}\big(T_L > T_{0,0}\big) \leq c_1(\lambda, f, \vartheta, k) \qquad \text{and} \qquad  \mathbb{E}_{0}\left[T_L^{Y} \right] \leq 8 \max\left\{ 1, \frac{1}{(\lambda f \vartheta)^2}\right\},
	\end{equation*}
where
    \begin{equation}\label{eq:def_c_1}
        c_1(\lambda, f, \vartheta, k) = \frac{9}{\lambda f   \vartheta k^{1/3}}+(1+\lambda \nat{f} \vartheta /2)^{-\frac 13 k^{1/3}} + e^{- \frac 18 (\lambda f \vartheta)^2 k^{1/3}}.
    \end{equation}
\end{lemma}

\begin{remark}
Note that \cite[Lemma 2.5]{huang2018contact} also states a result for the conditional expected value of $T_L$. 
In their proof the authors ignore times when the root is not infected, so that their result for the expected value is really only for $T_L^Y$. 
However, the  estimate on the probability is only true for the original process and not for $Y_t$.
We fix this omission by bounding the times when the root is not infected in the next lemma.
\end{remark}

\begin{proof}
		Let $\lambda>0$, $k\geq 1$ and $f\geq   \vartheta$ and define
		 $$K=
         \left\lceil \frac{\lambda   \vartheta {\color{black}f} k^{1/3}}{1+2\lambda  \vartheta {\color{black}f}} \right\rceil$$
	Recall the definition of  the constant $L$ given in \eqref{eq:defL} and note  that $K\leq L$. 
 We begin by observing that 
\begin{equation*}
		\mathbb{P}_{(0,1)}\big(T_K < T_{0,0}\big) \geq \prod_{j=0}^{K-1}\frac{(k-j)\lambda f  \vartheta}{1+(k-j)\lambda f   \vartheta + j},
\end{equation*}
where the term in the product corresponds to the probability that $|\Lambda_t^0|$ jumps upwards $K$ times before either the root or one of the leaves recovers.
From the latter inequality and using \cite[Lemma 3.4.3]{durret2019book}, we have
\[\begin{split}
		\mathbb{P}_{(0,1)}\big(T_K > T_{0,0}\big) &\leq \prod_{j=0}^{K-1}1 - \prod_{j=0}^{K-1}\frac{(k-j)\lambda f   \vartheta}{1+(k-j)\lambda f   \vartheta + j} \leq \sum_{j=0}^{K-1} \frac{1+j}{1+(k-j)\lambda f   \vartheta+ j}\\ &\leq \sum_{j=0}^{K-1} \frac{1+j}{(k-j)\lambda f   \vartheta}  \leq \frac{K^2}{(k-K)\lambda f   \vartheta},
	\end{split}\]
where in the last inequality we have used that $\{(1+j)/(k-j), j=0, \dots, K-1\}$ is  increasing in $j$. 

Note that as the function $x \mapsto \frac{x k^{1/3}}{1+2x}$ is increasing in $x$, we can upper bound the function by its limit as $x \rightarrow \infty$, so that  
\[ K \leq \Big\lceil \frac{k^{1/3}}{2} \Big\rceil \leq \frac 12 k^{1/3} + 1\leq \frac{3}{2} k^{1/3} , \]
as $k \geq 1$. Moreover, note that $\frac{3}{2} k^{1/3} \leq \frac{3}{4} k$ for all $k \geq 7$. Using these two estimates we have that 
 \[\begin{split}
 	\mathbb{P}_{(0,1)}\big(T_K > T_{0,0}\big)
& \leq \frac{K^2}{(k-K)\lambda f \nat{\vartheta}}
\leq c \frac{1}{\lambda f \vartheta k^{1/3}} ,
\end{split}\]
for $c = 4( \frac{3}{2})^2 =9$.

Now, we use the jump process $(Z_n)$ and \nat{Lemma \ref{lem:supermtg}.} We assume that $k$ or $f$ is large, so that $(1+\lambda f \vartheta/2)^{-Z_n}$ is a supermartingale while $Z_n \in (0,L)$.
We denote by $T_L^Z$ and $R_0^Z$ the analogous hitting time of a level above $L$ and the return time to a level below $0$  for the process $(Z_n)$, respectively. Note that, since $(Z_n)$ is obtained from $(Y_t)$ by observing the process only at its jump times, we see that $\{ T_L^{Y} > R_0^{Y} \} = \{ T_L^{Z} > R_0^{Z} \}$. By an application of the Optional Stopping Theorem with the bounded stopping time $\tau \wedge n$ where $\tau= R_0^Z \wedge T_L^Z$, we get 
	$$ \mathbb{E}_{K}\Big[\big(1+\lambda f   \vartheta/2\big)^{-Z_{\tau\wedge n}}\Big] \leq \mathbb{E}_{K}\Big[\big(1+\lambda f   \vartheta/2\big)^{-Z_{0}}\Big].$$
	By letting $n\to \infty$, we deduce  
		$$ \mathbb{E}_{K}\Big[\big(1+\lambda f   \vartheta/2\big)^{-Z_{\tau}}\Big] \leq \big(1+\lambda f/2\big)^{-K},$$
 	which implies
	\begin{equation*}
		\mathbb{P}_{K}\big(R_{0}^Z < T_{L}^Z\big) + \Big(1-\mathbb{P}_{K}\big(R_{0}^Z < T_{L}^Z\big)\Big) \big(1+\lambda f   \vartheta / 2\big)^{-L} \leq \big(1+\lambda f   \vartheta / 2\big)^{-K}.
	\end{equation*}
Discarding the second term on the left-hand side, it follows that  
	\begin{equation}\label{eq_bound_L}
			\mathbb{P}_{K}\big(R_{0}^Z < T_{L}^Z\big)\leq \big(1+\lambda f  \vartheta / 2\big)^{-K}.
	\end{equation}
 To go back to the unmodified process, we use that $\{ T_{0,0} < T_L \} \subset \{ R_0^Z < T_L^Z \}$, so 
that we can apply~\eqref{eq_bound_L}. Combined with the strong Markov property we obtain 
	\begin{eqnarray*}
		\mathbb{P}_{(0,1)}\big(T_L > T_{0,0} \big) 
		&\leq &	\mathbb{P}_{(0,1)}\big(T_K > T_{0,0}\big) + 	\mathbb{P}_{(0,1)}\big(T_L > T_{0,0} \, |\,  T_K <  T_{0,0}\big) \mathbb{P}_{(0,1)}\big(T_K < T_{0,0} \big)\\ 
		&\leq &	
       \mathbb{P}_{(0,1)}\big(T_K > T_{0,0}\big) + 	\mathbb{P}_{(K,1)}\big(T_L > T_{0,0}\big)\\
		&\leq &	
        \mathbb{P}_{(0,1)}\big(T_K > T_{0,0}\big) + 	\mathbb{P}_{K}\big(R_0^Z < T_L^Z\big).
	\end{eqnarray*}
Then, combining with the above estimates, we have the first claim of the lemma, that is, 
	\begin{eqnarray*}
		\mathbb{P}_{(0,1)}\big(T_L > T_{0,0}\big) &\leq& \nat{c}\frac{1}{\lambda f  \vartheta k^{1/3}}  + \big(1+ \lambda f   \vartheta / 2\big)^{-K}.
	\end{eqnarray*} 

    The first claim of the lemma now follows, if we note that 
    \[ \big(1+\lambda f   \vartheta / 2\big)^{-K} \leq
    (1+\lambda \nat{f} \vartheta /2)^{- \frac{\lambda f \vartheta}{1+ \nat{2}\lambda  f \vartheta}k^{1/3} } \leq (1+\lambda \nat{f} \vartheta /2)^{-\frac 13 k^{1/3}} + e^{- \frac 38 (\lambda f \vartheta)^2 k^{1/3}}, \]
    where we used that if $\lambda f \vartheta \geq 1$, as $x \mapsto \frac{x}{1+2x}$ is increasing, 
    \[\frac{\lambda f \vartheta}{1+ \nat{2}\lambda  f \vartheta}k^{1/3} \geq \frac 13 k^{1/3}.  \]
    Moreover, if $\lambda f \vartheta \leq 1$, then using that $\log(1 + x) \geq \frac 34 x$ for $x \leq 1/2$,
    \[ \frac{\lambda f \vartheta}{1+ \nat{2}\lambda  f \vartheta}k^{1/3} \log(1+ \lambda f \vartheta /2 ) \geq \frac{1}{8}(\lambda f \vartheta)^2 k^{1/3} .\]
   
	To deal with the second claim of the lemma, 
	recall the random variable $\mathfrak{N}$ defined in \eqref{eq_rvN} with $\mathbb{E}_{0} [\mathfrak{N}] = (\lambda f   \vartheta)^{-1}$.  
	From the definition of the process, we can calculate the drift of $Y_t$ before time $T_L^Y$ as
	\begin{equation*}
		\mu_{\lambda, f}= -L + \lambda f   \vartheta (k-L) - \frac{1}{\lambda f   \vartheta},
		\end{equation*}
so that  $(Y_t - \mu_{\lambda, f} t)$  is a martingale when stopped at $T_L^Y$. 

An application of the  Optimal Stopping Theorem with 
the bounded stopping time $T_L^Y  \wedge t$ tells us that
		\begin{equation*}
			\mathbb{E}_{0}\big[Y_{T_L^Y  \wedge t}\big] -  \mu_{\lambda, f} \mathbb{E}_{0}\big[T_{L}^Y  \wedge t\big] = \mathbb{E}_{0}[Y_0] = 0.
		\end{equation*}
		Furthermore, if we can show that $\mu_{\lambda,f} > 0$, 
        since $\mathbb{E}_{0}[Y_{\nat{T_L^Y}  \wedge t}]  \leq L$, we get
        the upper bound 
		\begin{equation*}
        \begin{split}
					\mathbb{E}_{0}\big [T_{L}^Y  \wedge t\big] &\leq \frac{L}{\mu_{\lambda, f}}.
        \end{split}
		\end{equation*}
Moreover, by taking $t \to \infty$, the same bound holds for $\mathbb{E}_{0}\big [T_{L}^Y ]$.

Thus, it remains to upper bound  $L / \mu_{\lambda,f}$.
 Write $x=\lambda f \vartheta$ and consider
 \begin{equation*}
 \begin{split}
\mu_{\lambda, f} &= xk -L(1+x) -\frac{1}{x}\ge xk-\left(\frac{xk}{1+2x}+1\right)(1+x)-\frac{1}{x}\\ &= \frac{x^2 k}{1+2x}-(1+x) - \frac{1}{x} 
 \end{split}
 \end{equation*}
Therefore, 
\begin{equation}\label{eq:2602-1} \mu_{\lambda,f} \ge \frac{1}{2} \frac{x^2 k}{1+2x} 
\end{equation}
holds if 
\[   \frac{x^2 k}{1+2x}\ge 2\Big(1+x + \frac{1}{x}\Big)
\quad \iff\quad  k\ge 2\Big(2+ \frac{3}{x} + \frac{3}{x^2} + \frac{1}{x^3}\Big) . 
\]
This condition can be guaranteed by either taking 
$k \geq 5$  and then $f$ large enough (so that $x = \lambda f \vartheta$ is large)  so that the right hand side is smaller than $5$. 
Alternatively, the condition can be achieved by taking $k \geq 18 \max\{ \frac{1}{x},1\} = 18\max\{ \frac{1}{\lambda f \vartheta},1\}$. 
 
In either case, we can deduce from~\eqref{eq:2602-1} that 
\[
     \frac{L}{\mu_{\lambda, f}} \le \frac{\frac{xk}{1+2x}+1}{\frac{1}{2}\frac{x^2 k}{1+2x}}= 2 \Big(\frac{1}{x}+\frac{2}{xk}+\frac{1}{x^2k}\Big) \leq 2 \Big(\frac{3}{x} + \frac{1}{x^2k}\Big) \nat{\le 8 \max\{ 1, x^{-2}\} } ,
\]
which gives the result claimed. 
\end{proof}

By combining the two results of the previous lemma with an estimate on the time that the root of the star is not infected, 
we can show that the process reaches $L$ infected leaves before time $fk^{2/3}$ with high probability for either $f$ or $k$ large.

\begin{lemma}\label{lem:TL1}
	Let $G_k$ be a star with leaves $v_1,\dots, v_k$ and root $\rho$, {\color{black}where $k\ge 7$}. Consider the  contact process $(X_t^0)\sim {\normalfont{\text{\textbf{CP}}(G_k; \mathbf{1}_\rho)}}$.  Let $\lambda >0$ be fixed. Then, for either $f$  or $k$ large enough,  
	$$ 	\mathbb{P}_{(0,1)}\big(T_L>fk^{2/3}\big)\leq \nat{c_1(\lambda, f, \vartheta, k) + \frac{32 \max\{ 1, (\lambda f \vartheta)^{-3}\}}{f k^{2/3}}} 
    + e^{-\frac{\widehat{c}_1}{4} \nat{ \min \{ 1, \lambda f \vartheta   \}} fk^{2/3}} +  {\color{black}2\,}e^{-\widehat{c}_2\lambda f^2\vartheta k^{2/3}},$$
	where {\color{black}$c_1(\lambda,f,\vartheta,k)$ is defined in~\eqref{eq:def_c_1}} and $\widehat{c}_1$ and $\widehat{c}_2$ 
    are positive constants {\color{black}not depending on any of the parameters}. 
\end{lemma}

\begin{proof}
Fix $\lambda>0$. Recall the process $(Y_t)_{t\ge 0}$ given in \eqref{eq:Y} and its corresponding stopping times $T_L^Y$ and  $R_0^Y$. We begin by letting the stochastic process $(\Gamma_t)_{t \geq 0}$ 
 denote  the number of infected leaves, where we ignore times when the root is not infected.
Let us now introduce the stopping time $T_L^\Gamma = \inf\{t>0: \Gamma_t \geq L \}$.  
 Then, by definition we have $T_L \geq T_L^\Gamma$ a.s.. 
Moreover, on the event that $T_L < T_{0,0}$, we have that 
$T_L^\Gamma  \leq T_L^Y$.  

Let us denote $g(k)=fk^{2/3}$ and choose \nat{$\delta>0$} as
\[\nat{\delta} :=  \min \{ 1, \lambda f \vartheta   \} / 4 . \] 
Observe that 
\begin{equation}\label{eq:TL}
	\begin{split}
\mathbb{P}_{(0,1)}\big(T_L> g(k)\big) 
 &\leq  \mathbb{P}_{(0,1)}\big(T_L^\Gamma \geq\delta g(k)\big) +  \mathbb{P}_{(0,1)}\big(T_L>g(k), \ T_L^\Gamma <\delta g(k)\big).
	\end{split}
\end{equation}
Our objective is now to find upper bounds for both probabilities on the right-hand side above. First, we deal with the first  of these probabilities. Note that
  $$ \begin{aligned} \mathbb{P}_{(0,1)}\big(T_L^\Gamma \geq \delta g(k)\big) & \leq 
\mathbb{P}_{(0,1)}\big( T_L > T_{0,0}\big) + 
\mathbb{P}_{(0,1)}\big(T_L^\Gamma \geq \delta g(k) ,  T_L < T_{0,0}\big) \\
& \leq 
c_1(\lambda, f, \vartheta, k) + \mathbb{P}_0( T_L^Y \geq \delta g(k)\big), 
\end{aligned} $$ 
where we used the first part of Lemma \ref{lem_cotasTKT0}. By the second part of the same lemma and Markov's inequality, we also have that
\[ \mathbb{P}_0( T_L^Y \geq\delta g(k)\big) \leq \, \frac{\E_0[T_L^Y]}{\delta g(k)}  \leq \nat{\frac{8 \max\{ 1, (\lambda f \vartheta)^{-2}\}}{\delta g(k)}}.
\]
Combining, we obtain 
\[  \mathbb{P}_{(0,1)}\big(T_L^\Gamma \geq\delta g(k)\big)
\leq \nat{c_1(\lambda, f, \vartheta, k) + \frac{8 \max\{ 1, (\lambda f \vartheta)^{-2}\}}{\delta g(k)}}.
\]
To estimate the second term in~\eqref{eq:TL},
let  $\ell_t$ denote the amount of time until $t$ that the process $(X_t^0)$ spends in the states when the root is not infected, i.e.\
$$\ell_t=\big|\big\{s \leq t: X_s^0=(j,0),\ \text{for some} \  j\in \{0,1, \dots, k \}\big\}\big|,$$
where $|\cdot|$ denotes the Lebesgue measure.
Note that as $T_L= T_L^{\Gamma} + \ell_{T_L}$,  it follows that
\begin{equation}
	 \mathbb{P}_{(0,1)}\big(T_L>g(k), \ T_L^\Gamma <\delta g(k)\big)\leq \mathbb{P}_{(0,1)}\big(T_L^\Gamma<\delta g(k), \ \ell_{T_L}>(1-\delta) g(k)\big).
\end{equation}
Now, we recall the random variable $\mathfrak{N}$ defined in \eqref{eq_rvN}. Denote by $m_t$ the number of times that the process $(\Gamma_t)$ makes a downward jump with the same distribution as $\mathfrak{N}$ until time~$t$.  Note that if  the current state is $(i,0)$ with $i\geq 1$, then the time until either the next jump down happens or when the root is re-infected is given by  an exponential random variable with distribution \textbf{Exp}$(i(1+\lambda f   \vartheta))$. Such a random variable is stochastically dominated by  another random variable that has distribution \textbf{Exp}$(1+\lambda f   \vartheta)$. Hence, each time period when the root is healthy can be dominated by 
$$\sum_{i=1}^{\mathfrak{N}+1}E_i,$$
where for each $i=1, \dots, \mathfrak{N}+1$  the random variables $E_i$ have independent \textbf{Exp}$(1+\lambda f  \vartheta)$ distributions.  Thus on the event, $\{T_L^\Gamma <\delta g(k)\}$, we deduce that 
$$\ell_{T_L} \leq \sum_{j=1}^{m_{T_L^\Gamma}}\sum_{i=1}^{\mathfrak{N}_j+1}E_i^{(j)} \leq\sum_{j=1}^{m_{\delta g(k)}}\sum_{i=1}^{\mathfrak{N}_j+1}E_i^{(j)},$$
where the random variables  $E_i^{(j)}$ have \textbf{Exp}$(1+\lambda f   \vartheta)$  distributions and $\mathfrak{N}_j$ are independent random variables with the same distribution as $\mathfrak{N}$. Further, as $\mathfrak{N_j}$ has a shifted geometric distribution, straightforward computations show that for each $j$,
$$\bar T_j:=\sum_{i=1}^{\mathfrak{N}_j+1}E_i^{(j)}$$  has an exponential distribution with parameter $\lambda f \vartheta$. Denote $a:=(1-\delta)(2\delta)^{-1}$.
It follows by a large deviation upper bound {\color{black}(or Chernoff bound)} that 
\begin{equation*}
	\begin{split}
\mathbb{P}_{(0,1)}&\big(T_L^\Gamma <\delta g(k), \ \ell_{T_L}>(1-\delta)g(k), \ m_{\delta g(k)} \leq 2\delta g(k)\big)\\ & \hspace{4cm}\leq  \mathbb{P}_{(0,1)} \Bigg(\sum_{j=1}^{\lfloor 2\delta g(k)\rfloor}
\bar T_j
> (1-\delta)g(k)\Bigg) \leq  e^{-\lfloor 2\delta g(k) \rfloor I(a)},
	\end{split}
\end{equation*}
using that $\mathbb{E}[ \bar T_j]  = 1/{(\lambda f \vartheta)} \leq (1-\delta)/(2 \delta)$ by the definition of $\delta$
and where $I(a)$ is the rate function of an Exp$(\lambda f \vartheta)$-random variable, i.e.
\begin{equation*}
    I(a) :=\sup_{t\ge 0}\left(ta - \log \mathbb{E}\big[e^{t\bar T_j}\big]\right) = a\lambda f \vartheta -1 -\log (a\lambda f \theta).
\end{equation*}
Note that if $\lambda f \vartheta \geq 1$, then $\delta = 1/4$ and $a = 3/2$. Moreover, in this case using that $x - 1 - \log x \geq x / 20$ for $x \geq 3/2$, we have that
\[ I(a) = \frac 32 \lambda f \theta  - 1 - \log \Big(\frac 32  \lambda f \theta\Big) \geq \frac{1}{20} \, \lambda f  \theta. \]
Conversely, if $\lambda f \vartheta \leq 1$, then $\delta = \lambda f \vartheta / 4$
and 
\[ a \lambda f \vartheta \geq 2 - \frac{1}{2} \lambda f \vartheta \geq \frac{3}{2} , \]
so that by the same bound as before $I(a) \geq a\lambda f\vartheta/20 \ge 3/40$. Thus,
\[
e^{-\lfloor 2\delta g(k)\rfloor I(a)}
\;\le\;
\begin{cases}
e^{\lambda f \vartheta/20}e^{-\lambda f\vartheta g(k)/40},
& \text{if } \lambda f \vartheta \ge 1, \\[2ex]
e^{3/40} e^{-3\lambda f\vartheta g(k)/80},
& \text{if } \lambda f \vartheta \le 1 .
\end{cases}
\]
If $g(k) = fk^{2/3}\ge 4$, then $e^{\lambda f \vartheta/20}e^{-\lambda f\vartheta g(k)/40}\le e^{-\lambda f\vartheta fk^{2/3}/80}$. Combining both cases,  and using that $e^{3/30}\leq 2$,
we deduce that for $k$ or $f$ sufficiently large
\begin{equation*}
	\begin{split}
\mathbb{P}_{(0,1)}&\big(T_L^\Gamma <\delta g(k), \ \ell_{T_L}>(1-\delta)g(k), \ m_{\delta g(k)} < 2\delta g(k)\big) \leq e^{-\lfloor 2\delta g(k)\rfloor I(a)}\leq  2e^{-\lambda f\vartheta g(k)/80}.
	\end{split}
\end{equation*}

In addition, note that the random variable $m_{\delta g(k)}$ has a Poisson distribution with parameter $\delta g(k)$. Similarly as before, using a large deviation bound {\color{black}(or Chernoff bound)} we have
\begin{equation*}
    \mathbb{P}_{(0,1)} \big(m_{\delta g(k)}> 2 \delta g(k)\big) 
    \le e^{-I(a)},
\end{equation*}
where 
$a=2\delta g(k)$ and
\begin{equation*}
    I(a)= \sup_{t\ge 0}\left(ta - \delta g(k)(e^t-1)\right) = \delta g(k) \left(2\log2-1\right).
\end{equation*}
Therefore, with $\widehat{c}_1=2\log2 -1$ and $\widehat{c}_2 = 1/80$,
\begin{equation*}
	\begin{split}
\mathbb{P}_{(0,1)}&\big(T_L^\Gamma < \delta g(k), \ \ell_{T_L}^\Gamma> (1-\delta) g(k)\big)   \\& \leq  \mathbb{P}(m_{\delta g(k)}>2\delta g(k))+\mathbb{P}_{(0,1)}\big(T_L^\Gamma<\delta g(k), \ \ell_{T_L}>(1-\delta)g(k), m_{\delta g(k)} < 2\delta g(k)\big) \\ & \leq  e^{-\widehat{c}_1\delta g(k)} + {\color{black}2\,} e^{-\widehat{c}_2\lambda f\vartheta g(k)}.
\end{split}
\end{equation*}
Substituting the estimates back into~\eqref{eq:TL}, we obtain
\begin{equation*}
	\begin{split}
		\mathbb{P}_{(0,1)}\big(T_L>g(k)\big) &\leq  \mathbb{P}_{(0,1)}\big(T_L^\Gamma >\delta g(k)\big) +  \mathbb{P}_{(0,1)}\big(T_L>g(k), \ T_L^\Gamma <\delta g(k)\big) \\& \leq 
        \nat{c_1(\lambda, f, \vartheta, k) + \frac{8 \max\{ 1, (\lambda f \vartheta)^{-2}\}}{\delta g(k)} + }
        e^{-\widehat{c}_1\delta g(k)} +  {\color{black}2\,}e^{-\widehat{c}_2\lambda f\vartheta g(k)}.
	\end{split}
\end{equation*}
This concludes the proof.
\end{proof}

The following lemma, proved by Huang and Durrett in \cite{huang2018contact}, will be useful for our next result.

\begin{lemma}[{\cite[Lemma 2.4]{huang2018contact}}]\label{lem_star}
	Let  $k$  be  an arbitrary non-negative integer and $f\geq   \vartheta$ a real number. Let $G_k$ be a star with leaves $v_1,\dots,v_k$ and root $\rho$. Consider  the contact process $(X_t^0)\sim {\normalfont{\text{\textbf{CP}}(G_k; \mathbf{1}_{\{\rho, v_1, \dots, v_L\}})}}$ where $\rho$ and $L=\lceil\lambda f   \vartheta k/(1+2\lambda f   \vartheta)\rceil$ leaves are initially infected. Then, for  {any  $\epsilon \in (0,1/2)$,} 
	\begin{equation}\label{eq:bound_star}
		\mathbb{P}_{(L,1)}\left( \inf_{0\leq t\leq S}|X_t^0| \leq \epsilon L \right) \leq (3 + \lambda f   \vartheta) (1+\lambda f   \vartheta  /2)^{-\epsilon L},
	\end{equation}
	where 
	\begin{equation}\label{eq_timeS}
		S
  = \frac{1}{4k}(1+\lambda f   \vartheta/2)^{L(1-2\epsilon)-1}.
	\end{equation}
\end{lemma}

\newcommand{\epszero}{\frac{5}{16}}

{\color{black}
\begin{remark}[Choice of $\epsilon$]
We note that when we want to apply Lemma~\ref{lem_star}, then we would like the probability on the right hand side of~\eqref{eq:bound_star} to go to zero and at the same time, we want $S$ to go to $\infty$. 
In the case when $f \rightarrow \infty$ and $k$ is fixed,
 we see that this requires that we choose $\epsilon$ such that
\begin{equation}\label{eq:choice_eps} \epsilon L > 1 \quad \mbox{and}\quad L(1-2\epsilon) > 1 . \end{equation}
This is only possible if $1/L < (L-1)/(2L)$ or in other words if $L > 3$. Note as $f \rightarrow \infty$, we have that 
$L \rightarrow \lceil k/2 \rceil$. Thus, the smallest value of $k$ we can choose is $k = \kzero$, so that the smallest $L$ is given by $L= 4$. 
In that case, we can check that~\eqref{eq:choice_eps} holds for $\epsilon = \frac{5}{16}$ , which will be our choice from now on.

Note also that in the case when $k \rightarrow \infty$ \nat{and $f$ is fixed}, both conditions hold automatically.
\end{remark}}

The next lemma tell us that if either $f$ or $k$ are large enough, then beginning with only  vertex $\rho$ infected at time 0, the number of infected leaves during the time interval $[fk^{2/3},S]$ is at least $\epsilon L$ with high probability.

\begin{lemma}\label{lem_infecleaves}
	Let  ${\color{black}k\ge \kzero}$  be  an integer and $f\geq   \vartheta$ a real number. {Let $G_k$ be a star of size $k$ with root $\rho$. Consider $(X_t)\sim {\normalfont{\text{\textbf{CP}}(G_k;\mathbf{1}_{\rho})}}$   the inhomogeneous contact process on $G_k$ \nat{and $\Lambda_t \subset X_t$ it set of infected leaves}.  Fix $\lambda >0$, then for $\epsilon = {\color{black}\epszero}$, and for either $f$  or $k$ large enough it holds that on the event $\{\mathcal{F_\rho} \geq f\}$}
	\begin{equation*}
		\mathbb{P}_{(0,1)}\Big(\inf_{fk^{2/3} \leq t \leq S} |\Lambda_t| \leq \epsilon L \ \Big|\Big. \ { \mathbb{F} }  \Big) \leq  R(f,k,\lambda),
	\end{equation*}
	where 
\begin{equation}\label{eq_Rf} \begin{aligned}
		R(f,k,\lambda) & =
      c_1(\lambda, f, \vartheta, k) + \frac{32}{f k^{2/3}} \max\{ 1, (\lambda f \vartheta)^{-3}\}
        + e^{-\widehat{c}_1 fk^{2/3}} +  {\color{black}2\,}e^{-\widehat{c}_2\lambda f^2\vartheta k^{2/3}}  \\
          & \qquad {\color{black} + \min\bigg\{ \widehat{C}_1 (2 + \lambda f \vartheta)^{-1/8} ,(3 + \lambda f \vartheta) (1 + \lambda \vartheta^2/2)^{ - \epszero \frac{\lambda\vartheta^2}{1+2\lambda \vartheta^2} k } \bigg\} }
    \end{aligned}
    \end{equation}
	and where \nat{$\widehat{c}_1, \widehat{c}_2, \widehat{C}_1 $ are positive constants not depending on any of the parameters {\color{black}and $c_1$ is defined in~\eqref{eq:def_c_1}}.}
\end{lemma}

\begin{proof}
	Fix $\lambda >0$. Here we use the notation $\mathbb{P} ( \cdot ):= \mathbb{P} (\ \cdot \ | \ \mathbb{F})$. We begin by noting that on the event $\{\mathcal{F}_\rho\geq f\}$ and by monotonicity we have
	$$ \left\{\inf_{fk^{2/3} \leq t \leq S} |\Lambda_t| \leq \epsilon L\right\} \subset \left\{\inf_{fk^{2/3} \leq t \leq S} |\Lambda_t^0| \leq \epsilon L\right\}.$$
	Then it is enough to prove the  estimate for the process $(\Lambda_t^0)_{t\ge 0}$. By the strong Markov property applied at $T_L$ on the event that 
	$T_L < fk^{2/3}$ and using that at $T_L$ the root is necessarily infected, we note that
	\begin{equation}\label{eq:probaT} 
		\begin{aligned}
			\mathbb{P}_{(0,1)}\left(\inf_{fk^{2/3} \leq t \leq S} |\Lambda_t^0| \leq \epsilon L \right) &= \mathbb{P}_{(0,1)}\left(\inf_{fk^{2/3} \leq t \leq S} |\Lambda_t^0| \leq \epsilon L, \  T_L \geq  fk^{2/3} \right)  \\  &\hspace{1.5cm}+ \     \mathbb{P}_{(0,1)}\left(\inf_{fk^{2/3} \leq t \leq S} |\Lambda_t^0| \leq \epsilon L,  \ T_L < fk^{2/3} \right)  \\ &\leq \mathbb{P}_{(0,1)}\big(T_L \geq  fk^{2/3}\big)    +    \mathbb{P}_{(L,1)}\left(\inf_{0 \leq t \leq S} |\Lambda_t^0| \leq \epsilon L\right).
		\end{aligned}
	\end{equation}
	
Now, appealing to Lemma \ref{lem_star}, we have  for any  $\epsilon \in (0,1/2)$ and for either $f$ or $k$ large enough,  
	\begin{equation*}
	\mathbb{P}_{(L,1)}\left( \inf_{0\leq t\leq S}|\Lambda_t^0| \leq \epsilon L \right) \leq
    (3 + \lambda f   \vartheta) (1+\lambda f   \vartheta  /2)^{-\epsilon L}.
\end{equation*}
{\color{black}Now, if $k\geq \kzero$ is fixed we can choose $f$ sufficiently large so that $L \geq \frac{9}{10}\lceil \frac{k}{2} \rceil$ and $\lambda f \vartheta \geq 1$. Therefore, 
\[(3 + \lambda f   \vartheta) (1+\lambda f   \vartheta  /2)^{-\epsilon L} \leq 
(3 + \lambda f   \vartheta) (1+\lambda f   \vartheta  /2)^{-\epszero \frac{9}{10}4 } 
\leq \hat{C_1} (2 + \lambda f \vartheta)^{-1/8} .\]
 for some absolute constant $\widehat{C}_1$. 
For $k$ large, we use that $x \mapsto \frac{x}{1+x}$ is increasing, so that
\[ (3 + \lambda f   \vartheta) (1+\lambda f   \vartheta  /2)^{-\epsilon L} \leq 
(3 + \lambda f \vartheta) (1 + \lambda \vartheta^2/2)^{ - \epszero \frac{\lambda\vartheta^2}{1+2\lambda \vartheta^2} k }.
\]
Plugging these two bounds back into \eqref{eq:probaT} and using Lemma \ref{lem:TL1}, we get the desired result.
}
\end{proof}

One more lemma will be needed for the next section, so we record it now. {The result describes the behaviour of the contact process on a graph consisting of a star with a single path joined to one of its leaves. We give a lower bound for the probability that the vertex on the path that is furthest from the root will be infected, if we start with the root of the star infected. This is a similar result to \cite[Lemma 3.2]{huang2018contact}.}

\begin{lemma}\label{lem_starchain}
	Let $r\geq 1$, ${\color{black}k\ge \kzero}$ be integers, and $f\geq   \vartheta$ a real number. Let $G_k$ be the star of size $k$ {with root $\rho$ and leaves $v_1, \dots, v_k$}, to which has been added a single {path} of length $r$ of descendants of some child $v_i$ of $\rho$. Denote  by $\mathcal{C}_{r}$ {the path with vertices  $u_1,\dots, u_r$ with $u_1=v_i$ and associated fitness values  $\{\mathcal{F}_{u_1},\dots, \mathcal{F}_{u_r}\}$}. Consider $(X_t)\sim {\normalfont{\text{\textbf{CP}}(G_k\cup \mathcal{C}_r; \mathbf{1}_{\rho})}}$  the inhomogeneous contact process on $G_k\cup \mathcal{C}_r$ where $\rho$  is initially infected. Then, {\color{black}for $S$ as in~\eqref{eq_timeS} with $\epsilon = \epszero$ and} for any $\tilde{S}\in [S/2, S]$ and either $f$ or $k$ large enough, 
 on the event $\{ \mathcal{F}_{ u_{r}} \geq f, \mathcal{F}_{\rho} \geq f\}$ that
	\begin{equation*}		\mathbb{P}_{(0,1)}\big({u_{r}}\notin X_{s}\ \text{for all}\ s \in[0,\tilde{S}] \  \big|\big. \ \mathbb{F} \big) \leq \left(1- \widehat{C}_{\lambda,f}\widehat{\lambda}^r\right)^{ S/4(2r+1) -1} + R(f,k,\lambda), 
	\end{equation*}
where
	\begin{equation}\label{eq_lhat}
\widehat{C}_{\lambda,f}=(1-e^{-\lambda \vartheta fL})(1-e^{-\gamma})C_{\lambda,f}\quad \quad \text{and}\quad \quad 	\widehat{\lambda}=\frac{\lambda   \vartheta^2}{\lambda   \vartheta^2+1}.
\end{equation}
  The terms  $S$, $C_{\lambda,f}$ and $R(f,k,\lambda)$ were defined in \eqref{eq_timeS}, \eqref{eq_ctelf} and \eqref{eq_Rf}, respectively.
\end{lemma}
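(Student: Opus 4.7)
My plan is to partition the time interval $[1,S]$ into $N := \lfloor (S-1)/(2r+1)\rfloor$ disjoint sub-intervals $J_j := [t_j, t_j+2r+1]$ of length $2r+1$ (with $t_j = 1+j(2r+1)$), and to show that in each $J_j$, the infection is successfully driven from $\rho$ down the entire path to $u_r$ with probability at least $\widehat{C}_{\lambda,f}\widehat{\lambda}^r$, independently across $j$ on a suitable good event. Since $N \geq S/(2r+1)-1$, a union bound will then give the claimed estimate.

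The first step is to invoke Lemma~\ref{lem_infecleaves} to produce a good event $E$ with $\mathbb{P}(E^c \mid \mathcal{F}_\rho \geq f) \leq R(f,k,\lambda)$ on which $|\Lambda_t|\geq \epsilon L$ for all $t\in[1,S]$; this directly contributes the additive term $R(f,k,\lambda)$ and henceforth I work on $E$. The second step is to construct, for each $j$, a success event $A_j \subset \{u_r \in X_s \text{ for some }s\in J_j\}$ built entirely out of the Poisson clocks inside $J_j$ that live on the edge $\rho u_1$, the path edges $u_iu_{i+1}$, and the recovery clocks at $\rho, u_1, \dots, u_r$. Concretely, $A_j$ is the intersection of three sub-events: (a) $\rho$ becomes infected at some time in $[t_j, t_j+1]$, which on $E$ has probability at least $1-e^{-\lambda\epsilon L} \geq 1/2$ (for $k$ large) because the at least $\epsilon L$ infected leaves fire at $\rho$ at total rate $\geq \lambda\epsilon L$; (b) the next clock attached to $\rho$ is an infection $\rho\to u_1$ rather than a recovery at $\rho$, which happens with probability $\lambda\mathcal{F}_\rho\mathcal{F}_{u_1}/(1+\lambda\mathcal{F}_\rho\mathcal{F}_{u_1}) \geq \lambda f/(1+\lambda f)$ using $\mathcal{F}_\rho\geq f$; and (c) once $u_1$ is infected, the argument of Lemma~\ref{lem_chain} applied to the sub-path $u_1,\dots,u_r$ gives propagation to $u_r$ within the remaining $2(r-1)$ units with probability at least $(1-e^{-\gamma(r-1)})\prod_{i=1}^{r-1}\lambda\mathcal{F}_{u_i}\mathcal{F}_{u_{i+1}}/(1+\lambda\mathcal{F}_{u_i}\mathcal{F}_{u_{i+1}})$, and this is at least $\tfrac12\widehat{\lambda}^{r-2}\lambda f/(1+\lambda f)$ using $\mathcal{F}_{u_r}\geq f$, $\mathcal{F}\geq 1$, and $r$ large so that $1-e^{-\gamma(r-1)}\geq 1/2$. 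Multiplying the three factors and comparing with the definitions of $C_{\lambda,f}$ and $\widehat{C}_{\lambda,f} = \tfrac14 C_{\lambda,f}$ yields a conditional lower bound of $\widehat{C}_{\lambda,f}\widehat{\lambda}^r$ on the probability of $A_j$.

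Finally, since $A_j$ is measurable with respect to clocks inside $J_j$ attached only to the edges $\rho u_1, u_1 u_2, \dots, u_{r-1} u_r$, iterating the Markov property at the times $t_j$ (and using only the local condition $|\Lambda_{t_j}|\geq \epsilon L$ secured by $E$ at each step) gives $\mathbb{P}(\bigcap_{j=0}^{N-1}A_j^c) \leq (1-\widehat{C}_{\lambda,f}\widehat{\lambda}^r)^N \leq (1-\widehat{C}_{\lambda,f}\widehat{\lambda}^r)^{S/(2r+1)}$, and a union bound against $E^c$ then produces the desired inequality.

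The main obstacle I anticipate is the last independence argument: the good event $E$ is global over $[1,S]$, so naively conditioning on it would destroy the Markov structure across the sub-intervals. The resolution I envisage is to avoid conditioning on $E$ itself and instead work with the family of $\mathcal{F}_{t_j}$-measurable local conditions $\{|\Lambda_{t_j}|\geq \epsilon L\}_j$ (which is all that step~(a) actually needs), noting that the path clocks used in (b)--(c) are independent of the leaf process driving $E$, so the product structure needed for the $(1-\widehat{C}_{\lambda,f}\widehat{\lambda}^r)^N$ bound is genuinely preserved.
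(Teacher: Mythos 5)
Your proposal is correct and follows essentially the same route as the paper's proof: split off the bad event where the number of infected leaves drops below $\epsilon L$ via Lemma \ref{lem_infecleaves} (yielding the additive $R(f,k,\lambda)$), then divide the remaining time into blocks of length $2r+1$, in each block reinfect $\rho$ from the leaves within one time unit and push the infection down the path to $u_r$ via the argument of Lemma \ref{lem_chain}, and multiply the per-block failure probabilities. Your explicit separation of the step $\rho\to u_1$ from the remaining path, and your remark on preserving the Markov/product structure across blocks, are only minor presentational refinements of the same argument.
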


\begin{proof}
	Let $m := \lfloor \tilde{S}(2r+1)^{-1} \rfloor$ and $m_0 := \lceil fk^{2/3}(2r+1)^{-1} \rceil$. Since either $f$ or $k$ are large enough we can assume that $fk^{2/3}\le S/4$ and w.l.o.g.\  $m_0\le m$.   Here we use the notation $\mathbb{P} ( \cdot ):= \mathbb{P} (\ \cdot \ | \ \mathbb{F})$  
 and we add the subscript $\mathbb{P}_{(0,1)}$ when only the root is initially infected. Begin by noting that 
	\[\begin{split}
		\mathbb{P}_{(0,1)} \left({u_{r}}\notin X_{s}\ \text{for all}\ s \in[0,\tilde{S}]\right)  & \leq 	\mathbb{P}_{(0,1)}(\mathcal{B}^c)  \\ &\quad + \mathbb{P}_{(0,1)}\left( {u_{r}}\notin X_{s}\ \text{for all}\ s \in[0,m(2r+1)], \mathcal{B}  \right),
	\end{split}\]
	where
 \[ \mathcal{B}:=\left\{\inf_{fk^{2/3}\leq s\leq S}|\Lambda_s| \geq \epsilon L\right\}\]
 and $|\Lambda_t|$ is the number of infected leaves of $\rho$ at time $t$. From Lemma \ref{lem_infecleaves} we know for $f$ and $k$ large enough that
	\begin{equation*}
\mathbb{P}_{(0,1)}\left(\mathcal{B}^c\right) \leq  R(f,k,\lambda),
	\end{equation*}
	where $R(f,k,\lambda)$ was defined in \eqref{eq_Rf}. The proof is thus complete as soon as we can show that for $f$ and $r$ sufficiently large 
	\begin{equation}\label{eq_leavesinfec}
		\begin{split}
		\mathbb{P}_{(0,1)}\left( {u_{r}}\notin X_{s}\ \text{for all}\ s \in[0,m(2r+1)],\ \mathcal{B} \right)  \leq  \big(1- \widehat{C}_{\lambda,f}\widehat{\lambda}^r\big)^m.
		\end{split}
	\end{equation}
 Define the sequence of times $t_0=0$ and $t_i=(2r+1)i$ for $i \in \{1,\dots, m\}$. For fixed $i \in \{0,\dots, m-1\}$, let us define  $\tau_i := \inf\{u\ge t_i: \rho \in X_u \} $,  
 \[B_{\tau_i} := \left\{\inf_{s \in [t_i, \tau_i\wedge t_{i+1}]} |\Lambda_{s}|\geq \epsilon L\right\}\qquad \text{and}\qquad A_i:=\{u_{r}\notin X_{s}\ \text{for all}\ s \in[t_{i},t_{i+1}]\}.\]
 Thus, with this notation we have
 \begin{eqnarray*}
 		 \mathbb{P}_{(0,1)}\left({u_{r}}\notin X_{s}\ \text{for all}\ s \in[0,m(2r+1)] ,  \mathcal{B}  \right) 
    \leq \mathbb{P}_{(0,1)}\left(\bigcap_{i=m_0}^{m-1}A_i, \bigcap_{i=m_0}^{m-1}B_{\tau_i}  \right). 
 \end{eqnarray*}
Since $\mathcal{B}_{\tau_i}\in \mathcal{F}_{\tau_i}$, $\tau_i$ is $\mathcal{F}_{\tau_i}$-measurable and $\mathcal{F}_{t_i}\subset \mathcal{F}_{\tau_i}$, we deduce 
\begin{equation*}
\begin{split}
    &\mathbb{P}\left(u_{r}\in X_{s}\  \text{for some}\ s \in[t_i,t_{i+1}] \ \big|\big. \  B_{\tau_i}, \ \mathcal{F}_{t_i} \right) 
    \\ & \hspace{3cm} \ge \mathbb{P} \left(\{u_{r}\in X_{s}\ \text{for some}\ s \in[\tau_i,t_{i+1}]\} \cap \{\tau_i\leq t_{i}+1\} \ \big|\big. \  B_{\tau_i}, \ \mathcal{F}_{t_i}\right)
    \\ &   \hspace{3cm} \geq 
   \E \Big( \mathbf{1}_{\{ \tau_i \leq t_i+1 \} } \mathbb{P} \big(u_{r}\in X_{s}\ \text{for some}\ s \in[\tau_i,\tau_i + 2r ] \ \big|\big. \   \mathcal{F}_{\tau_i}\big) \, \Big|\Big. \, B_{\tau_i} ,\mathcal{F}_{t_i}\Big).
   \end{split}
\end{equation*}
Now note that by an application of the strong Markov property
and monotonicity, we obtain 
	\[\begin{split}
 \mathbb{P}(u_{r}\in  X_{s}\ \text{for some}\ s \in[\tau_i,\tau_{i} + 2r] \mid \mathcal{F}_{\tau_i})&\ge   \mathbb{P}(u_{r}\in  X_{s}\ \text{for some}\ s \in[0,2r]\mid  X_0 = \{ \rho \})  \\
 & \geq \big(1-e^{-\gamma }\big)  C_{\lambda, f} \widehat{\lambda}^r, 
\end{split}\]
where we used Lemma \ref{lem_chain} together with the fact that $1-e^{-\gamma} \leq 1-e^{-\gamma r} $ holds for $r\geq 1$ in the last step.
In addition, under $B_{\tau_i}$, the probability that $\tau_i \leq t_i+1$ is bounded from below by the probability that one of the $L$ exponential clocks with rate $\lambda \vartheta f$ attached to the leaves rings before time 1, i.e.
 \[\mathbb{P}(\tau_i\leq t_i+1\mid  B_{\tau_i}, \mathcal{F}_{t_i})\ge 1-e^{-\lambda \vartheta f L}.\]
Combining the estimates, we have that
\[ \mathbb{P}\left(u_{r}\notin X_{s}\  \text{for all}\ s \in[t_i,t_{i+1}] \ \big|\big. \  B_{\tau_i}, \ \mathcal{F}_{t_i} \right) 
\leq 1- (1-e^{-\lambda \vartheta fL})(1-e^{-\gamma})C_{\lambda, f}\widehat{\lambda}^r.
\]
Therefore, by the tower property
\[\begin{aligned}
\mathbb{P}_{(0,1)}\bigg( & \bigcap_{i=m_0}^{m-1}A_i \cap B_{\tau_i} \bigg)\\
& = \mathbb{E}_{(0,1)} \left( \mathbf{1}_{\bigcap_{i=m_0}^{m-2}  A_{i}\cap B_{\tau_i}}  \mathbb{P}\left(u_{r}\not\in X_{s}\  \text{for all}\ s \in[t_{m-1},t_{m}] , B_{\tau_{m-1}} \big|\big. \ \mathcal{F}_{t_{m-1}} \right)\right)
\\ & \leq \mathbb{E}_{(0,1)}\Big(
\mathbf{1}_{\bigcap_{i=m_0}^{m-2}  A_{i}\cap B_{\tau_i}}
\mathbb{P}\left(u_{r}\not\in X_{s}\  \text{for all}\ s \in[t_{m-1},t_{m}] \big|\big. \ B_{\tau_{m-1}} ,\mathcal{F}_{t_{m-1}} \right) \Big).
\end{aligned}\]
Using the previous bound and iterating, 
\begin{eqnarray*} \mathbb{P}_{(0,1)}\left({u_{r}}\notin X_{s}\ \text{for all}\ s \in[0,m(2r+1)]  ,  \mathcal{B}  \right)  & \leq &  \prod_{i=m_0}^{m-1} \left(1- (1-e^{-\lambda \vartheta fL})(1-e^{-\gamma})C_{\lambda, f}\widehat{\lambda}^r\right)\\  & \leq  &  \left(1- \widehat C_{\lambda, f}\widehat{\lambda}^r\right)^{m-m_0}.
 \end{eqnarray*}
Thus the desired result follows since $m-m_0\ge S/4(2r+1)-1$.
\end{proof}

\section{Proof of Theorem \ref{teo_noexptail}}\label{sec_prooftheononexptails}

The proof of the theorem follows a similar overall structure as those 
in {\cite[Theorem 3.2]{pemantle1992contact} and afterwards in \cite[Theorem 1.4]{huang2018contact}.} However, the presence of fitness turns out to lead to  significant changes throughout the whole proof.	
The proof is long and we break it up into several lemmas.  The general idea is to push the infection to stars  in a suitably chosen  generation and then  bring the infection back to the root. 

In the first lemma, we estimate from below the mean of the number of stars in a given generation. In order to do so, we first introduce some notation. 
Note that we  can choose $\vartheta > 0$ small enough such that
\begin{equation}\label{eq:muvartheta}
   \mu_\vartheta:=\mathbb{E}\big[\xi \mathbf{1}_{\{\mathcal{F} \geq \vartheta\}}\big] > 1. 
\end{equation}
By monotonicity, we can also assume 
that this $\vartheta$ is such that condition~\eqref{eq:offspringfitness} also holds for this choice.
Throughout the remainder of the section, we fix this $\vartheta>0$.  Let $f\ge \vartheta$ be a real number and ${\color{black}k\ge \kzero}$ a natural number. 
\nat{In what follows, we will assume either $f$ or 
$k$ sufficiently large, {\color{black}so that the results of the previous section hold.}} Let 
\begin{equation}\label{eq:root}
 A_{f,k}^{\rho} :=\{|\{v \in V_1:\  \mathcal{F}_v\ge \vartheta \quad \text{and}\quad \xi_v\ge 1\}|=k \quad \text{and}\quad \mathcal{F}_\rho \ge f\},  
\end{equation}
where $|\cdot |$ denotes the cardinality of the set and $V_1$ the set of vertices in generation one as defined in \eqref{eq_Vr}. 

Let $r\ge 1$ \nat{be} a natural number. Denote by $\mathcal{G}_r$ and \nat{$\mathcal{A}_{f,k}^r$} the set of \textit{good vertices} and \nat{$k$-}\textit{stars} in generation $r$ in $(\mathcal{T},\mathbb{F}(\mathcal{T}))$, i.e.
	\begin{equation*}
		\mathcal{G}_{r}:= \big\{v\in V_r: \ \mathcal{F}_{w}\geq \vartheta \quad \text{for all}\quad  w \quad  \text{ancestors of} \quad v\big\} 
	\end{equation*}
 and
 \begin{equation*}
		\nat{\mathcal{A}_{f,k}^r}:= \big\{v\in\mathcal{G}_r: \ \mathcal{F}_{v}\geq f \quad \text{and}\quad   \xi_v^{\vartheta}=k\big\},
	\end{equation*}
where $\xi_v^{\vartheta} = \sum_{w \in c(v)} \mathbf{1}_{\{ \mathcal{F}_w \geq \vartheta\}}$, for $c(v)$ the set of children of $v$. We note for later that 
\nat{the conditional distribution of $\xi_v^{\vartheta}$ given the tree up to generation $r$} is the same as that of 
\begin{equation}\label{eq:xivartheta}
     \xi^\vartheta := \sum_{i=1}^{\xi} I_i,
\end{equation}
with $I_1, I_2, \dots $ Bernoulli random variables with success probability $\mathbb{P}(\mathcal{F}\geq \vartheta)$.

\begin{lemma}\label{lem:zr}
Let $r\ge 2$. Denote by $Z_r$ the number of \nat{$k$-}stars in generation $r$ in $(\mathcal{T},\mathbb{F}(\mathcal{T}))$. Then 
\begin{equation*}
    \mathbb{E}\big[Z_r \mid A_{f,k}^{\rho}\big] \ge k \mu_\vartheta^{r-2}\mathbb{P}\left(\xi^\vartheta=k,\ \mathcal{F}\geq f\right),
\end{equation*}
where $\mu_\vartheta$ and $\xi^\vartheta$ are defined in \eqref{eq:muvartheta} and \eqref{eq:xivartheta}, respectively.
\end{lemma}

\begin{proof}
We will assume from now on that $\mathbb{P}$ refers to the conditional probability  measure given $A_{f,k}^{\rho}$. Denote by $\mathbb{T}_{\le r}$ and $\mathbb{F}_{\le r}$ the $\sigma$-algebras \nat{generated} by the BGW tree and the fitness values up to generation $r$, respectively. We also denote by $c(v)$ the set of children of vertex $v\in V(\mathcal{T})$. We begin by noting that
    \[\begin{split}
        \mathbb{E}\big[Z_r \mid \mathbb{T}_{\le r+1},\,  \mathbb{F}_{\le r}\big] &=  \sum_{v\in \mathcal{G}_r} \mathbf{1}_{\{\mathcal{F}_v\ge f\} }   
        \mathbb{E}\big[\mathbf{1}_{\{\xi_v^\vartheta = k\}} \mid \mathbb{T}_{\le r+1},\,  \mathbb{F}_{\le r}\big] \\&= \sum_{w\in \mathcal{G}_{r-1}} \sum_{v \in c(w)} \mathbf{1}_{\{\mathcal{F}_v\ge f\} }  \mathbb{E}\big[\mathbf{1}_{\{\xi^\vartheta_v = k\}} \mid \xi_v   \big].
    \end{split}\]
Now, using the tower property yields
       \[\begin{split}
        \mathbb{E}\big[Z_r \mid \mathbb{T}_{\le r},\,  \mathbb{F}_{\le r-1}\big] &= \mathbb{E}\Big[\mathbb{E}\big[Z_r \mid \mathbb{T}_{\le r+1},\,  \mathbb{F}_{\le r}\big]\mid \mathbb{T}_{\le r},\,  \mathbb{F}_{\le r-1} \Big] \\ &=   \sum_{w\in \mathcal{G}_{r-1}} \sum_{v \in c(w)}  \mathbb{E}\left[ \mathbf{1}_{\{\mathcal{F}_v\ge f\} } \mathbb{E}[  \mathbf{1}_{\{\xi^\vartheta_v = k\}} \mid \xi_v]  \mid \mathbb{T}_{\le r},\,  \mathbb{F}_{\le r-1} ] \right] \\ &=   \sum_{w\in \mathcal{G}_{r-1}} \xi_w \mathbb{P}(\xi^\vartheta= k, \, \mathcal{F} \ge f).
    \end{split}\]
Thus, once again appealing to the tower property we get 
         \[\begin{split}
        \mathbb{E}\big[Z_r \mid \mathbb{T}_{\le r-1},\,  \mathbb{F}_{\le r-2}\big] &=   \sum_{u\in \mathcal{G}_{r-2}} \sum_{w \in c(u)}  \mathbb{E}[\xi \mathbf{1}_{\{\mathcal{F}\ge \vartheta\}}]\mathbb{P}(\xi^\vartheta= k, \, \mathcal{F} \ge f)\\ &=  \sum_{u\in \mathcal{G}_{r-2}} \xi_u\mathbb{E}\big[\xi  \mathbf{1}_{\{\mathcal{F}\ge \vartheta\}}\big]\mathbb{P}( \xi^\vartheta= k, \, \mathcal{F} \ge f).
    \end{split}\]
    Recursively we deduce
         \[\begin{split}
        \mathbb{E}\big[Z_r \mid \mathbb{T}_{\le 2},\,  \mathbb{F}_{\le 1}\big] &=  \sum_{u\in \mathcal{G}_{1}} \xi_u \big(\mathbb{E}\big[\xi  \mathbf{1}_{\{\mathcal{F}\ge \vartheta\}}\big]\big)^{r-2}\mathbb{P}(\xi^\vartheta= k, \, \mathcal{F} \ge f).
    \end{split}\]
    Therefore, 
      $$\mathbb{E}[Z_r]= \mu_\vartheta^{r-2}\mathbb{P}(\xi^\vartheta= k, \, \mathcal{F} \ge f) \mathbb{E}\left[\sum_{u\in \mathcal{G}_{1}} \xi_u \right] \ge  k \mu_\vartheta^{r-2}\mathbb{P}(\xi^\vartheta= k, \, \mathcal{F} \ge f),$$
      where in the last inequality we have used that we are working on the probability measure conditional on the event $A_{f,k}^{\rho}$ which states that the set of children of $\rho$ with  $\xi_v\ge 1$ and $\mathcal{F}_v\ge \vartheta$ has cardinality $k$.
\end{proof}

Now we need the following two lemmas, which the reader
can find in \cite[Lemma 2.3 and Lemma 3.4]{pemantle1992contact}. The first lemma gives a
lower bound for the probability that a binomial random variable is at least~$1$. The second lemma gives a necessary condition for  $\liminf_{t\to\infty}g(t)$ to be positive, where $g$ is a function on the non-negative \nat{real} numbers.  

\begin{lemma}[{\cite[Lemma 2.3]{pemantle1992contact}}]\label{lem_pematle_bin}
	Let $M$ be a positive integer-valued random variable and pick $p<\mathbb{E} [M]$. For any $x\in (0,1]$, let $M_x$ be a random variable with binomial distribution \normalfont{\textbf{Bin}}$(M,x)$. Then there exits $\delta >0$ such that $$\mathbb{P}(M_x\geq 1)\geq  px\wedge \delta.$$ 
\end{lemma}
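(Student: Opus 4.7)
The plan is to study the function
\[
f(x) := \mathbb{P}(M_x \geq 1) = 1 - \mathbb{E}\big[(1-x)^M\big], \qquad x \in [0,1],
\]
via three elementary observations: $f(0) = 0$, $f$ is nondecreasing, and $f$ is concave on $[0,1]$. The concavity is the crucial input: for each integer $M \geq 1$ the map $x \mapsto 1 - (1-x)^M$ has second derivative $-M(M-1)(1-x)^{M-2} \leq 0$, and taking expectations preserves concavity. Monotonicity is immediate because $x \mapsto (1-x)^M$ is decreasing.

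First I would handle the case $\mathbb{E}[M] < \infty$. Since $0 \leq M(1-x)^{M-1} \leq M$ and $\mathbb{E}[M] < \infty$, dominated convergence legitimates differentiation inside the expectation and gives $f'(0^+) = \mathbb{E}[M]$. Because $f$ is concave with $f(0) = 0$, the ratio $f(x)/x$ is nonincreasing on $(0,1]$ with limit $\mathbb{E}[M]$ at $0$. As $p < \mathbb{E}[M]$, one can choose $x_0 \in (0,1]$ such that $f(x)/x \geq p$ for all $x \in (0, x_0]$, i.e.\ $f(x) \geq p x$ on that interval. Now set $\epsilon := p x_0$. On $(0, x_0]$ we have $f(x) \geq p x \geq p x \wedge \epsilon$, while on $[x_0, 1]$ monotonicity yields $f(x) \geq f(x_0) \geq p x_0 = \epsilon \geq p x \wedge \epsilon$. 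Combining the two regimes gives the desired bound for every $x \in (0,1]$.

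If $\mathbb{E}[M] = \infty$, I would reduce to the finite-mean case by truncation: pick $N$ so that $\mathbb{E}[M \wedge N] > p$ (possible by monotone convergence). Couple $\mathrm{Bin}(M \wedge N, x)$ with $\mathrm{Bin}(M, x)$ so that the former is dominated by the latter; this is possible because $M \wedge N \leq M$ and we can use the same independent Bernoulli$(x)$ trials. Hence $\mathbb{P}(M_x \geq 1) \geq \mathbb{P}((M \wedge N)_x \geq 1)$, and applying the previous paragraph to $M \wedge N$ transfers the bound. The argument is otherwise short; the only real subtlety is recognising the concavity of $f$ and using it to turn the derivative estimate $f'(0^+) = \mathbb{E}[M] > p$ into a linear lower bound on a whole interval, rather than just an infinitesimal one.
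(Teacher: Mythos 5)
Your proof is correct. Note that the paper itself does not prove this lemma; it is quoted verbatim from Pemantle \cite[Lemma 2.3]{pemantle1992contact}, so there is no in-paper argument to compare against. Your three structural observations about $f(x)=1-\mathbb{E}[(1-x)^M]$ all check out: concavity passes through the mixture over $M$, concavity together with $f(0)=0$ does give that $f(x)/x$ is nonincreasing, and dominated convergence (with dominating function $M$, using $1-(1-x)^m\leq mx$) justifies $f(x)/x\to\mathbb{E}[M]$ as $x\to 0^+$ when $\mathbb{E}[M]<\infty$. The split at $x_0$ with $\epsilon=px_0$ and the monotone coupling $\mathrm{Bin}(M\wedge N,x)\leq\mathrm{Bin}(M,x)$ for the infinite-mean case are both valid (the truncation is not even strictly necessary, since $f(x)/x$ still increases to $\mathbb{E}[M]=\infty$ as $x\downarrow 0$ by monotone convergence, but it does no harm). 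The only unaddressed case is $p\leq 0$, where the claim is trivial since $px\wedge\epsilon\leq 0$; in the paper the lemma is only ever invoked with $p>0$.
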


\begin{lemma}[{\cite[Lemma 3.4]{pemantle1992contact}}]\label{lem_pemantle}
	Let $G$ be any non-decreasing function on $[0,\infty)$ such that $G(x)\geq x$ on some neighbourhood of 0. Suppose $g$ is a function on $[0,\infty)$  that satisfies, for some $S>0$, 
	\begin{equation}\label{eq_condLemmaPemantle}	
		\inf_{0\leq t \leq S} g(t) >0\quad \quad \text{and}\quad  \quad g(t) \geq G\left( \inf_{0 \leq s \leq t-S} g(s) \right)\quad \quad   \text{for} \quad  t>S.
	\end{equation}
	Then, $$\liminf_{t \to \infty} g(t) >0.$$
\end{lemma}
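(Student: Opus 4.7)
The plan is to prove a uniform lower bound $g(t) \geq a'$ for all $t \geq 0$ by induction on intervals of length $S$, where $a' > 0$ is chosen small enough that it lies both below the initial positive infimum and inside the neighbourhood on which $G(x) \geq x$.

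Concretely, the first step is to fix the constant. Let $\delta > 0$ be a radius guaranteed by the hypothesis such that $G(x) \geq x$ for all $x \in [0,\delta]$, and set $a = \inf_{0 \leq t \leq S} g(t)$, which is strictly positive by \eqref{eq_condLemmaPemantle}. Define
\[
a' = \min(a, \delta) > 0.
\]
By construction $a' \leq \delta$, so $G(a') \geq a'$, and also $g(t) \geq a'$ for $t \in [0,S]$.

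The second step is the induction. I claim $g(t) \geq a'$ for all $t \in [0, nS]$ for every $n \in \mathbb{N}$. The base case $n = 1$ is just the observation above. For the inductive step, suppose the bound holds on $[0, nS]$, and fix $t \in [nS, (n+1)S]$. Since $t > S$, the recursive inequality from \eqref{eq_condLemmaPemantle} gives
\[
g(t) \geq G\!\left( \inf_{0 \leq s \leq t-S} g(s) \right).
\]
Because $t - S \leq nS$, the inductive hypothesis yields $\inf_{0 \leq s \leq t-S} g(s) \geq a'$. Using that $G$ is non-decreasing and that $a' \leq \delta$, we then obtain
\[
g(t) \geq G(a') \geq a',
\]
completing the inductive step. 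Letting $n \to \infty$ gives $g(t) \geq a'$ for all $t \geq 0$, and hence $\liminf_{t \to \infty} g(t) \geq a' > 0$.

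There is no serious obstacle in this argument; the only subtle point is the choice to truncate $a$ at $\delta$. If one tried to run the induction with $a$ itself, the step $G(a) \geq a$ might fail (since the super-identity property of $G$ is only assumed near $0$), and successive iterates $G^{(n)}(a)$ need not remain bounded below by a positive constant. Taking $a' = \min(a, \delta)$ sidesteps this: the bound $a'$ is in the good regime where $G$ does not decrease it, so the induction closes in one line and propagates the same positive constant forever.
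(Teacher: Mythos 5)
Your proof is correct. Note that the paper itself does not prove this lemma at all --- it is imported verbatim from Pemantle (Lemma 3.4 of \cite{pemantle1992contact}) --- so there is nothing internal to compare against; your block-by-block induction on intervals $[0,nS]$, with the constant truncated to $a'=\min(a,\delta)$ so that it stays in the region where $G(x)\geq x$ and monotonicity of $G$ closes the step, is the standard and complete argument, and you correctly identify the truncation at $\delta$ as the one point where a naive iteration with $a$ itself could fail.
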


To prove  Theorem \ref{teo_noexptail} we will first aim  to show that for any $\lambda>0$ we have $\liminf_{t \to \infty}\mathbb{P}(\rho \in X_t)>0$ where we average over the tree $(\mathcal{T},\mathbb{F}(\mathcal{T}))$. 
As we will explain below, this immediately implies that $\lambda_2 = 0$.

Now, our strategy is to apply Lemma \ref{lem_pemantle} to the function 
$$g(t):=\mathbb{P}(\rho\in X_t), \qquad t> 0.$$
The next step is to find  a non-decreasing function $G$ which satisfies \eqref{eq_condLemmaPemantle}. In the following lemma we do so under a technical condition that will be implied by Assumption \eqref{eq:offspringfitness} as we will see later on. 

\begin{lemma}\label{lem:boundG}
{\color{black}For $\epsilon = \epszero$,} suppose $S$, $ \widehat{C}_{\lambda, f}$, and $\widehat{\lambda}$ are as defined in   \eqref{eq_timeS} and \eqref{eq_lhat}, respectively. 
   Assume that there exists $r = r(f,k)\ge 2$ such that
    \begin{equation}\label{eq_condS}
		\frac{S}{4(2r+1)} -1> \frac{2}{\widehat{C}_{\lambda, f} \widehat{\lambda}^r}.
	\end{equation}
 	For either $f$ or $k$ large enough there exists a constant $c\in(0,1)$ and $\delta>0$ such that 
if we define
\begin{equation}\label{eq:fnG}
    G(x):= c k \mu^{r-2}_{\vartheta}\mathbb{P}\left(\xi^\vartheta=k, \, \mathcal{F}\geq f \mid A_{f,k}^{\rho}\right)x \wedge \delta, \quad \quad  \text{for}\quad x\geq 0,
\end{equation}
then it holds that
  	\begin{equation*}
		g(t)\ge G\left(\inf_{0\leq s\leq t-S} g(s)\right), \quad \text{for}\quad t>S.
	\end{equation*}
\end{lemma}

\begin{proof}
	Throughout this proof fix $\lambda > 0$.  
We will from now on assume that $\mathbb{P}$ refers to the conditional probability  measure given $A_{f,k}^{\rho}$.  Also, we emphasize that we always start with the root initially infected (unless specified otherwise).
	We first prove that, conditionally   on $v\in V_r$ being a star,  then it will be  infected before time $S$ with probability bounded away from zero uniformly for either $f$ or $k$ large enough. 
	That is to say, there exists a positive constant $c_1$ such that for either $f$ or $k$ sufficiently large and any $v \in  \nat{\mathcal{A}_{f,k}^r}$,
		\begin{equation*}
			p_{in}:=\mathbb{P}_{\mathcal{T}, \mathbb{F}}\big(v \in X_s \ \text{for some}\  s\in [0,S]\  \big)\geq c_1.
		\end{equation*}
	The proof of this bound follows by an application of  Lemma~\ref{lem_starchain}, 
 by using the montonicity of the inhomogeneous contact process restricted
to the subgraph consisting of a star with root $\rho$ together with a  path $\mathcal{C}_r$  from one descendent of the root to $v$.
 Indeed, for either $f$ or $k$ sufficiently large we obtain 
	\begin{equation*}
		1-p_{in}\le \big(1- \widehat{C}_{\lambda,f}\widehat{\lambda}^r\big)^{S/4(2r+1)-1} + R(f,k,\lambda),
	\end{equation*}
{where the function $R(f,k,\lambda)$ is defined in \eqref{eq_Rf}.}
	Then, using the inequality $(1-x)^{1/x}<e^{-1}$, observe that \eqref{eq_condS} forces the first term on the right-hand side of  the last equation to be at most $e^{-2}$, i.e.\ 
	\begin{eqnarray*}
		\big(1- \widehat{C}_{\lambda,f}\widehat{\lambda}^r\big)^{S/4(2r+1)-1} \leq \big(1- \widehat{C}_{\lambda,f}\widehat{\lambda}^r\big)^{2/(\widehat{C}_{\lambda, f} \widehat{\lambda}^r)}< e^{-2}.
	\end{eqnarray*}
	Further, for either $f$ or $k$ sufficiently large we have $R(f,k,\lambda)\leq 1/2$. Therefore, we deduce that $p_{in}$ is bounded away from 0 for either $f$ or $k$ large enough, i.e.
	\begin{equation}\label{eq_pnozero}
		p_{in} \geq c_1,\quad \quad  \text{where} \quad \quad c_1=\frac{1}{2}-e^{-2}>0.
	\end{equation}
	In other words,  with positive probability we push the infection to a  generation $r$ which satisfies condition \eqref{eq_condS}. 
	
Now conditioning on the event that $v\in X_{t-\nat{S}}$ for some $v\in \nat{\mathcal{A}_{f,k}^r}$, we deduce the following lower bound, for  $t>S$
	\begin{equation}\label{eq_gA}
		g(t) =   \mathbb{P}\big(\rho\in X_t  \big) \geq H_1(t) H_2(t),
	\end{equation}
	where the functions $H_1$ and $H_2$ are given by 
	$$H_1(t)= \mathbb{P}\big(v\in X_{t-S}\ \text{for some}\ v\in \nat{\mathcal{A}_{f,k}^r}\big),$$ 
	$$ H_2(t)= \mathbb{P}\big(\rho \in X_t \ \big|\big.  \  v\in X_{t-S}\ \text{for some}\ v\in \nat{\mathcal{A}_{f,k}^r}\big).$$ 
	Hence, the next goal is to  establish lower bounds for  the functions $H_1$ and $H_2$.

	\textbf{Lower bound for $H_1$.}  Denote by $Z_r= |\nat{\mathcal{A}_{f,k}^r}|$ the number of \nat{$k$-}stars vertices in generation~$r$. Taking into account that we are working conditionally on the event $A_{f,k}^{\rho}$, we have from Lemma \ref{lem:zr} that 
	\begin{equation*}
		\mathbb{E}[Z_r] \geq
		k \mu^{r-2}_{\vartheta}\mathbb{P}\left(\xi^\vartheta=k,\ \mathcal{F}\geq f\right).
	\end{equation*}
 Let $M_r^S$ be the random number of \nat{$k$-}stars in generation $r$ that are infected before time $S$. Together with \eqref{eq_pnozero}, the above estimate is sufficient to obtain that, 
	\begin{eqnarray}
		\mathbb{E} \big[M_r^S\big]&\geq& 
					\mathbb{E}\Bigg[ \sum_{v \in \nat{\mathcal{A}_{f,k}^r}} \mathbb{P}_{\mathcal{T}, \mathbb{F}} \big( v \in X_s \mbox{ for some } s \in [0,S]\big)  \Bigg]
		\\ & \geq& c_1 \E[ Z_r] \geq  c_1 k \mu^{r-2}_{\vartheta}\mathbb{P}\left(\xi^\vartheta=k, \,\mathcal{F}\geq f\right). \label{eq:mean_MrS}
	\end{eqnarray}	
	Let us define, for $t>2S$,
	$$\chi(t):= \inf\big\{g(s): 0\leq s \leq t-S\big\}.$$ 
	Now, ignore all the infections of $v\in \nat{\mathcal{A}_{f,k}^r}$ by its parent except the first infection. Then, the contact process on the subtrees rooted at vertices $v\in \nat{\mathcal{A}_{f,k}^r}$ which are infected at some time $s<S$ will evolve independently from time $s$ to time $t-S$ and then vertex $v$ will be infected with probability at least $\chi(t)$. 
	Therefore, if we
	denote by $M_r^{t-S}$  the random number of \nat{$k$-}stars in generation $r$ that are infected at time $t-S$, we can conclude that
	the random variable $M_r^{t-S}$  stochastically  dominates  a  random variable $M_\chi$ that has  distribution \textbf{Bin}$\big(M_r^S,\chi(t)\big)$. By Lemma \ref{lem_pematle_bin} and~\eqref{eq:mean_MrS}, there exists $\delta_1 >0$ such that
	\begin{eqnarray*}
		\mathbb{P}\big( M_\chi \geq 1\big) \geq
		{2^{-1}}c_1 k \mu^{r-2}_{\vartheta} \mathbb{P}\left(\xi^\vartheta=k, \,\mathcal{F}\geq f\right)\chi(t) \wedge \delta_1.
	\end{eqnarray*} 
{Note that the factor $2^{-1}$ is required to guarantee the hypotheses of Lemma~\ref{lem_pematle_bin}.}
		Moreover, since $M_r^{t-S}$  dominates the random variable $M_\chi$,
	we obtain that, for $t>2S$
	\begin{equation*}
		H_1(t)\geq  \mathbb{P}\big(M_r^{t-S}\geq1 \big) \geq \mathbb{P}\big( M_\chi \geq 1\big),
	\end{equation*} 
	which implies for $t>2S$
	\begin{equation}\label{eq_lowerboundH1}
		H_1(t)\geq {2^{-1}}c_1 k \mu^{r-2}_{\vartheta} \mathbb{P}\left(\xi^\vartheta =k, \, \mathcal{F}\geq f\right)\chi(t) \wedge \delta_1.
	\end{equation}

	\textbf{Lower bound for $H_2$.} Let $t>S$. We break down $H_2(t)$ into a product of conditional probabilities as follows 
	\begin{eqnarray*}
		H_2(t)= \mathbb{P}\big(\rho \in X_t \ \big|\big.  \  v\in X_{t-S}\ \text{for some}\ v\in \nat{\mathcal{A}_{f,k}^r}\big)\geq h(t)\widehat{h}(t), 
	\end{eqnarray*}
	where
	\[\begin{split}
		h(t)&= \mathbb{P}\big(\rho \in X_s \ \text{for some}\ s\in [t-S,t-fk^{2/3}-1]\ \big|\big. \  v \in X_{t-S}\ \text{for some}\ v\in \nat{\mathcal{A}_{f,k}^r}\big), \\ \widehat{h}(t)&= \mathbb{P}\big(\rho \in X_t \ \big|\big.\ \rho \in X_s \ \text{for some}\ s\in [t-S,t-fk^{2/3}-1],\  v\in X_{t-S}\ \text{for some}\ v\in \nat{\mathcal{A}_{f,k}^r}\big).
	\end{split}\]
	We can lower bound $h(t)$ by ignoring all the possible infections other than the infection of $v$ at time $t-S$, we have
	\begin{eqnarray*}
		h(t) \geq \mathbb{P}_{\{v\}}\big(\rho \in X_s \ \text{for some}\ s\leq S-fk^{2/3}-1 \ \big|\big. \ v \in \nat{\mathcal{A}_{f,k}^r}\big),
	\end{eqnarray*}
	where $\mathbb{P}_{\{v\}}$ denote the law of the process with $v$ initially infected. 
	Moreover, by ignoring one child of $v$ and considering the star of size $k$ centered at $v$, 
		the same argument as before applies and we deduce that
		\begin{equation*}\label{eq_lowerboundp1}
			\begin{split}
			h(t) \geq  \mathbb{P}_{\{v\}}\big(\rho \in X_s \ \text{for some}\ s\leq S-fk^{2/3}-1 \ \big|\big. \ v \in \nat{\mathcal{A}_{f,k}^r}\big) \geq c_1,
				\end{split}
		\end{equation*}
		where the constant $c_1$ is the same as in \eqref{eq_pnozero} and comes from assumption \eqref{eq_condS}. 
	
	On the other hand, once again by monotonicity of the contact process we have, for $t>S$ 
\begin{eqnarray*}
		\widehat{h}(t)  \geq   \mathbb{P}\big(\rho \in X_t \ \big|\big.\ \rho \in X_s \ \text{for some}\ s\in [t-S,t-fk^{2/3}-1]\big).
	\end{eqnarray*}
	Let us now introduce the stopping time for $X$,  
	$$ \tau= \inf\{u\geq t-S:\  \rho \in X_u\}.$$ 
Recall $\epsilon = \epszero$. On the event $\{\tau\leq t-fk^{2/3}-1\}$, let $\mathcal{B}_\tau$ be the event that the number of infected neighbours of $\rho$ is at least $\epsilon L$ in the entire random time interval 
$[\tau+fk^{2/3}, t-1]$, i.e.\ 
	\begin{equation*}
		\mathcal{B}_\tau=   \left\{ \inf_{u \in [\tau+fk^{2/3}, t-1]} | \Lambda_u| \geq \epsilon L \right\},
	\end{equation*}
	where we recall that $\Lambda_u$ is the set of infected leaves of $\rho$ at time $u$. Denote by $\mathscr{F}_t$  the $\sigma$-algebra generated by tree, fitness and the contact process up to time $t$. Then, appealing to the strong Markov property and Lemma \ref{lem_infecleaves} and using that $\tau \geq t-S$, we obtain
	\[\begin{split}
		\mathbb{P}(\mathcal{B}_\tau | \mathscr{F}_\tau) \geq \mathbb{P}\left(\inf_{u \in [\tau +fk^{2/3}, S+\tau]} |\Lambda_t| \geq \epsilon L \ \Big|\Big. \mathscr{F}_\tau \right) \geq 1- R(f,k,\lambda),
	\end{split}\]
	where the function $R(f,k,\lambda)$ is defined in \eqref{eq_Rf}. Next, define $\mathcal{I}$ to be the event that at least one of the $\epsilon L$ neighbours that is infected at time $t-1$ infects the root  at a time in $[t-1,t]$ before recovering.  
 
Denote by $x_1, \dots, x_{\lceil \epsilon L\rceil}$ the infected neighbours at time $t-1$. We can estimate the probability of $\mathcal{I}$ as
	\begin{equation*}
	\begin{split}
		\mathbb{P}(\mathcal{I} \mid \mathcal{F}_{t-1}) &\geq 1- \prod_{i=1}^{\lceil \epsilon L\rceil} \mathbb{P}(\{x_i\ \text{recovers}\  \text{in}\ [t-1,t]\}\cup \{x_i\ \text{does not infect}\ \rho \ \text{in}\ [t-1,t]\})\\ & = 1- \prod_{i=1}^{\lceil \epsilon L \rceil}(1- \mathbb{P}(\{x_i\ \text{does not recover}\  \text{in}\ [t-1,t]\}\cap \{x_i\ \text{infects}\ \rho \ \text{in}\ [t-1,t]\})\\ & \ge 1 - \prod_{i=1}^{\lceil \epsilon L \rceil} (1- e^{-1}(1-e^{-\lambda f \vartheta})) \geq 1- (1-e^{-1}(1-e^{-\lambda f \vartheta}))^{\epsilon L}=: 1 - a(f,k, \lambda).
	\end{split}
\end{equation*}
	Also, note that $\mathbb{P}(\mathcal{R}_\rho | \ \mathscr{F}_{t-1})\geq e^{-1}$, where $\mathcal{R}_{\rho}=\{\rho \ \text{does not recover in}\ [t-1,t]\}.$ With this notation  we have the following estimate
\[\begin{split}
		\widehat{h}(t) \geq \mathbb{P}\big(\mathcal{I}\cap \mathcal{R}_{\rho}\cap \mathcal{B}_\tau \mid \tau \leq t-fk^{2/3}-1\big).
	\end{split}\]
	Conditioning on $\mathscr{F}_{t-1}$ and using the independence of the infection and recovery events, we obtain that
	\[\begin{split}
		\mathbb{P}\big(\mathcal{I}\cap \mathcal{R}_{\rho}\cap \mathcal{B}_\tau\cap \{\tau \leq t-fk^{2/3}-1\}\big) &=\mathbb{E}\big[\mathbb{P}(\mathcal{I}\cap \mathcal{R}_\rho | \mathscr{F}_{t-1})\mathbf{1}_{\{\mathcal{B}_\tau, \tau\leq t-fk^{2/3}-1\}} \big] \\ & = \mathbb{E}\Big[\mathbb{P}(\mathcal{I}|\mathscr{F}_{t-1})\mathbb{P}(\mathcal{R}_\rho|\mathscr{F}_{t-1})\mathbf{1}_{\{\mathcal{B}_\tau, \tau\leq t-fk^{2/3}-1}\} \Big] \\ &\geq \left(1 - a(f,k, \lambda) \right)e^{-1}\mathbb{E}\Big[\mathbb{P}(\mathcal{B}_\tau |\mathscr{F}_{\tau})\mathbf{1}_{\{\tau\leq t-fk^{2/3}-1}\}\Big].
	\end{split}\]
 
	Combining with the previous estimates, we get the lower bound
	\[\begin{split}
		\widehat{h}(t)  \geq e^{-1}  \left(1 - a(f,k,\lambda) \right)\big(1-R(f,k, \lambda)\big).
	\end{split}\]
	Denote by $c_2$ the expression 
  $$c_2=c_2(f,k, \lambda):=e^{-1} \left(1 - a(f,k, \lambda) \right) \big(1-R(f,k,\lambda)\big).$$ 
	Observe that this expression  depends on $\lambda, f$ and $k$, however we see that $c_2\to e^{-1}$ as $k\to \infty$ and $f$ is fixed and  $c_2\to e^{-1}(1-e^{-1})^{\epsilon\lceil k/2\rceil}$ as $f\to \infty$ and $k\geq \kzero$ fixed. In other words, in our  two relevant cases we can lower bound $c_2$ by a constant.
	The above lower bounds for $h$ and $\widehat{h}$ give, for $t>S$
	\begin{equation}\label{eq_lowerboundH2}
		H_2(t)\geq h(t)\widehat{h}(t)  \geq c_1 c_2.
	\end{equation}
	Plugging  \eqref{eq_lowerboundH1} and \eqref{eq_lowerboundH2} back into \eqref{eq_gA}, we now see that, for $t>2S$
	\begin{eqnarray*}
		g(t)\geq H_1(t) H_2(t) \geq c k \mu^{r-2}_\vartheta \mathbb{P}\left(\xi^\vartheta=k, \, \mathcal{F}\geq f\right)\chi(t) \wedge \delta_1,
	\end{eqnarray*}
	where $c=2^{-1}c_1^2c_2 \in (0,1)$.  Hence, under the assumption that $r$ satisfies  condition \eqref{eq_condS}, we  establish the following lower bound
	\begin{equation*}
		g(t)\geq \left\{ \begin{array}{lcc}
			c k \mu^{r-2}_{\vartheta}\mathbb{P}\left(\xi^\vartheta=k, \, \mathcal{F}\geq f\right)\chi(t) \wedge \delta_1,&     & t>2S \\
			\\ \displaystyle \inf_{0 \leq s \leq 2S} g(s), &  & S \leq t \leq 2S.
		\end{array}
		\right.
	\end{equation*}
	Furthermore, note that 
	\begin{eqnarray*}
		\inf_{0 \leq s \leq 2S} g(s)  \geq \mathbb{P}\big(\rho \ \text{never recovers in} \ [0,2S]\big) = e^{-2S} >0.
	\end{eqnarray*}
	Therefore, the above two estimate are sufficient to deduce  that  there exists $\delta >0$ such that
	$$ g(t) \geq 	c k \mu^{r-2}_{\vartheta} \mathbb{P}\left(\xi ^\vartheta=k, \, \mathcal{F}\geq f\right)\chi(t) \wedge \delta , \quad \quad \text{for}\quad  t>S,$$
 thus completing the proof.
\end{proof}

The following final preparatory lemma gives us an equivalent formulation of hypothesis \eqref{eq:offspringfitness} which is easier to use in our final proof. Further, in the case of unbounded fitness we prove that \eqref{eq:offspringfitness} implies the analogue condition for $\xi^\vartheta$ defined in \eqref{eq:xivartheta} which is also need it in the last proof.

\begin{lemma}\label{lem:equivcond}
   Condition \eqref{eq:offspringfitness} is equivalent to 
 \begin{equation}\label{eq:mixingmoments}
  \limsup_{\substack{f+k\to \infty\\ \, {\color{black}k\ge \kzero}, \,  f\ge \vartheta}} \frac{\log \big(\mathbb{P}(\xi=k, \, \mathcal{F}  \geq f)\big)}{k\log (1
     +f)}= 0.
 		\end{equation}
   Furthermore,  
   Condition \eqref{eq:offspringfitness} implies \eqref{eq:mixingmoments} but with $\xi^\vartheta$ instead of $\xi$, where $\xi^\vartheta$ is defined in~\eqref{eq:xivartheta}.
\end{lemma}

 \begin{proof}
Let {\color{black}$k\ge \kzero$} and $f\ge \vartheta$. First, we assume that  \eqref{eq:offspringfitness} does not hold, i.e. 
 \begin{equation}\label{eq:mixingmomentsop}
 \mathbb{E}\big[(1+\mathcal{F})^{c\xi}\mathbf{1}_{\{{\color{black}\xi\ge \kzero}, \, \mathcal{F}\ge \vartheta\}}\big] < \infty, \qquad \text{for some}\quad  c>0.
 		\end{equation}
 Now, we note that 
 \[\begin{split}
 \mathbb{E}\big[(1+\mathcal{F})^{c\xi}\mathbf{1}_{\{ {\color{black}\xi\ge \kzero}, \, \mathcal{F}\ge \vartheta\}}\big] \ge \mathbb{E}\big[(1+\mathcal{F})^{c\xi}\mathbf{1}_{\{\xi=k, \, \mathcal{F}\ge f\}} \big] \ge (1+f)^{ck} \mathbb{P}(\xi=k,\,  \mathcal{F}  \geq f),
     \end{split}\]
 which implies 
 \[\begin{split}
 		\limsup_{\substack{f+k\to \infty\\ {\color{black}k\ge \kzero}, \, f\ge \vartheta}} \frac{\log \big(\mathbb{P}(\xi=k, \, \mathcal{F}  \geq f)\big)}{k\log (1+f)} \leq \limsup_{\substack{f+k\to \infty\\ {\color{black}k\ge \kzero}, \, f\ge \vartheta}} \frac{\log( (1+f)^{-ck} \mathbb{E}[(1+\mathcal{F})^{c\xi}\mathbf{1}_{\{ {\color{black}\xi\ge \kzero}, \, \mathcal{F}\ge \vartheta\}}])}{k \log (1+f)}=-c. 
 \end{split}\]
 In other words, we have showed that 
   \begin{equation}\label{eq:opplimsup}
       	\limsup_{\substack{f+k\to \infty\\ \, {\color{black}k\ge \kzero}, \,  f\ge \vartheta}} \frac{\log \big(\mathbb{P}(\xi=k, \, \mathcal{F}  \geq f)\big)}{k\log (1
     +f)}= -\delta,   \qquad \text{for some}\quad  \delta\in (0,\infty].
   \end{equation}
Now, assume that \eqref{eq:opplimsup} holds. Then there exist $\delta>0$ and $C>0$ such that 
 	 \begin{equation}\label{eq:inequality2}
\mathbb{P}(\xi=k, \, \mathcal{F}  \geq f) \leq C(1+f)^{-\delta k},\qquad \text{for all} \quad f\ge \vartheta \quad \text{and}\quad {\color{black}k\ge \kzero}.
 	 \end{equation}
  We choose $c\in (0,\delta)$ small enough such that $2^c(1+\vartheta)^{c-\delta}<1$. Now for some $k_0$ such that 
  $(\delta-c)k_0>1$, we observe that the following inequalities hold
 		\[	\begin{split}
 \mathbb{E}\big[(1+\mathcal{F})^{c\xi}\mathbf{1}_{\{\xi\geq k_0, \, \mathcal{F}\geq \vartheta\}}\big]  &\leq  \sum_{k=k_0}^{\infty} \sum_{f= 0}^\infty \mathbb{E}\big[(1+\mathcal{F})^{c\xi}\mathbf{1}_{\{\mathcal{F}\in [f+\vartheta, f+\vartheta+1)\}\cap \{\xi= k\}}\big] \\ & \leq   \sum_{k=k_0}^{\infty} \sum_{f= 0}^\infty (2+\vartheta+f)^{ck}\mathbb{P}(\xi =k, \, \mathcal{F}\geq f+\vartheta)\\ & \leq   C\sum_{k=k_0}^{\infty} \sum_{f= 0}^\infty \big(2^c(1+\vartheta+f)^{-(\delta-c)}\big)^k \\ & =  C\sum_{f= 0}^\infty  \frac{2^{ck_0}(1+\vartheta+f)^{-(\delta-c) k_0}}{1-2^c(1+\vartheta+f)^{-(\delta-c)}}\\ & \le \frac{C\, 2^{ck_0}}{1-2^c(1+\vartheta)^{c-\delta}}\sum_{f= 0}^\infty (1+\vartheta+f)^{- (\delta-c) k_0} < \infty,
 	\end{split}\]
  where in the third inequality we used that $(2+\vartheta+f)^{ck}\le 2^{ck} (1+\vartheta+f)^{ck}$ and in the equality we get a geometric sum which converges since $2^c(1+\vartheta+f)^{c-\delta}\le 2^c(1+\vartheta)^{c-\delta}$ for all $f\ge 0$.
On the other hand, 
	\[	\begin{split}
 \mathbb{E}\big[(1+\mathcal{F})^{c\xi}\mathbf{1}_{\{{\color{black}\kzero\le \xi}\leq k_0, \, \mathcal{F}\geq \vartheta\}}\big]  &\leq  \sum_{k=\kzero}^{k_0} \int_0^\infty \mathbb{P}((1+\mathcal{F})^{ck}\ge x, \, \xi =k, \, \mathcal{F}\ge \vartheta) \,\mathrm{d} x \\ & = \sum_{k=\kzero}^{k_0}\int_{0}^{(1+\vartheta)^{ck}} \mathbb{P}(\xi =k, \, \mathcal{F}\ge \vartheta) \,\mathrm{d}x  
 \\ & \quad + \sum_{k=\kzero}^{k_0}\int_{(1+\vartheta)^{ck}}^\infty \mathbb{P}(\xi =k, \, \mathcal{F}\ge x^{1/ck}-1) \, \mathrm{d}x \\
 & \le k_0 (1+\vartheta)^{ck_0} + C\sum_{k=\kzero}^{k_0}\int_{1}^\infty x^{-\delta/c} \,\mathrm{d}x <\infty,
 	\end{split}\]
  where in the last inequality we have used \eqref{eq:inequality2}. Thus we  get that \eqref{eq:mixingmomentsop} holds.
  
 For the second claim, first suppose that the $\limsup$ in~\eqref{eq:mixingmoments}
 is achieved along a sequence $(f_n,k_n)$ such that $f_n \rightarrow \infty$. Then, we see that 
\[\begin{split}
	\mathbb{P}(\xi^\vartheta = k_n,\  & \mathcal{F}\ge f_n) = \sum_{\ell=1}^{\infty} \mathbb{P}\bigg( \sum_{i=1}^{\ell} I_i = k_n  \bigg) \mathbb{P}(\xi =\ell, \,  \mathcal{F}\ge f)\\ &\geq \mathbb{P}\bigg(\sum_{i=1}^{k_n} I_i = k_n\bigg) \mathbb{P}(\xi= k_n, \,  \mathcal{F}\ge f_n) = \mathbb{P}(\mathcal{F}\geq \vartheta)^{k_n} \mathbb{P}(\xi =k_n, \, \mathcal{F}\ge f_n).
\end{split}\]
Hence,
\[\begin{split}
	\limsup_{\substack{f+k\to \infty\\ \, {\color{black}k\ge \kzero}, \,  f\ge \vartheta}}	\frac{\log \big(\mathbb{P}(\xi^{\vartheta}=k, \, \mathcal{F}  \geq f)\big)}{k\log (1+f)} \geq 	\lim_{n \rightarrow \infty} \frac{\log \big(\mathbb{P}(\mathcal{F}\geq \vartheta)^{k_n}\mathbb{P}(\xi=k_n, \, \mathcal{F}\geq f_n)\big)}{k_n\log (1+f_n)}=0,
\end{split}
\]
where in the last equality we have that $f_n \rightarrow \infty$. 

Now, suppose that the $\limsup$ in~\eqref{eq:mixingmoments} is achieved along
a sequence $(f_n,k_n)$ such that $k_n \rightarrow \infty$. Set $p = \mathbb{P}(\mathcal{F} \geq \vartheta)$. Then, we define $k_n^* \in \N$ such that $pk_n \leq k_n^* < p k_n + 1$, so that $\lfloor k_n^*/p \rfloor = k_n$. 
Similarly to before, we estimate
\[\begin{split}
	\mathbb{P}(\xi^\vartheta = k_n^*,  \mathcal{F}\ge f_n) 
 &\geq \mathbb{P}\bigg(\sum_{i=1}^{k_n} I_i = k_n^*\bigg) \mathbb{P}(\xi= k_n, \,  \mathcal{F}\ge f_n) .
\end{split}\]
Now, since $I_i$ are independent Bernoulli$(p)$ random variables,  by the de Moivre-Laplace theorem, we have that for $k_n$ large, there exists a $c > 0$ such that
\[ \mathbb{P}\bigg(\sum_{i=1}^{k_n} I_i = k_n^*\bigg) =
 \frac{(1+o(1))}{\sqrt{p(1-p)k_n}} \exp\Big\{- (k_n^* - p k_n)^2 / \sqrt{k_np(1-p)}\Big\} 
 \geq \frac{c}{\sqrt{k_n}} . \]
Thus, combining these estimates, we get that
\[ \frac{\log \big(\mathbb{P}(\xi^{\vartheta}=k_n^*, \, \mathcal{F}  \geq f_n)\big)}{k_n\log (1+f_n)} 
\geq \frac{\log( c/\sqrt{k_n})}{k_n \log (1+f_n)} + \frac{\log \big(\mathbb{P}(\xi=k_n, \, \mathcal{F}  \geq f_n)\big)}{k_n\log (1+f_n)} \rightarrow 0,
\]
as $n\to \infty$, which completes the proof.
 \end{proof}

With the previous preparatory lemmas in hand, we are now ready to prove Theorem \ref{teo_noexptail}.

\begin{proof}[Proof of Theorem \ref{teo_noexptail}]
 Throughout this proof fix $\lambda > 0$. 
We assume that  $\mathbb{P}$ refers to the conditional probability  measure given $A_{f,k}^{\rho}$ 
and as the latter event has positive probability it suffices to show $\liminf_{t \rightarrow \infty} \mathbb{P}(\rho \in \xi_t)> 0$ under the conditional probability measure.
 
	We would like to use Lemma \ref{lem_pemantle} applied to the non-negative and non-decreasing function $G$ defined in \eqref{eq:fnG}. 
	First, observe that 
	\begin{eqnarray*}
		\inf_{0 \leq s \leq S} g(s)  \geq \mathbb{P}\big(\rho \ \text{never recovers in} \ [0,S]\big) = e^{-S} >0.
	\end{eqnarray*}
Thus, we have that $g$ satisfies the first condition of Lemma \ref{lem_pemantle}. Now we want to apply Lemma \ref{lem:boundG} with the following choice of $r$, 
	\begin{equation}\label{eq_r_def}
		r ={r(f,k)=} \bigg\lceil -\frac{\log\big(\mu^{-2}_{\vartheta} c k\mathbb{P}(\xi^\vartheta =k, \, \mathcal{F}\geq f)\big) }{\log \mu_\vartheta} \bigg\rceil.
	\end{equation}
 We can see that as $\E[\xi^\vartheta]<\infty$ we have 
 by dominated convergence that 
\[ k \mathbb{P}(\xi^\vartheta = k, \mathcal{F} \geq f) 
\leq \E[ \xi^\vartheta \mathbf{1}_{ \{  \xi^\vartheta \geq k , \mathcal{F} \geq f \} } ] \rightarrow 0, 
\]
as either $k \to \infty$ or $f \to \infty$. Therefore, in either case $r = r(f,k)\rightarrow \infty$. Moreover, we have $c k \mu^{r-2}_{\vartheta} \mathbb{P}\left(\xi^\vartheta =k, \, \mathcal{F}\geq f\right)\geq 1$, and this in turn implies that $G(x)\geq x$ for some neighbourhood of 0. We complete our argument
		by showing that
		under the hypothesis of the theorem, we can find $f$ and $k$ such that~\eqref{eq_condS} holds.  This will allow us to apply Lemma~\ref{lem:boundG} together with Lemma~\ref{lem_pemantle} to \nat{obtain}
		\begin{equation*}
			\liminf_{t \to \infty} \mathbb{P}(\rho \in X_t)  >0 , 
		\end{equation*}
{\color{black}first when conditioning on $A_{f,k}^\rho$, but since this event has positive probability also without conditioning.}
        \nat{Now by the reverse Fatou lemma, we conclude  that
    \[ \mathbb{P}\big( \forall s \geq 0 \, \exists t \geq s \, : \, \rho \in X_{t} \big) = \mathbb{E}\Big[ \limsup_{t \rightarrow \infty} \mathbf{1}_{\{ \rho \in X_t \}} \Big] \geq
    \limsup_{t\to\infty} \mathbb{P}(\rho \in X_{t}) > 0,\]
which means that the process survives strongly.
    }
	
 It remains to show that~\eqref{eq_condS} holds.
 We begin by noting that condition \eqref{eq_condS} is implied by 
	\begin{equation}\label{eq:S2}
		\frac{S}{4(2r+1)} > \frac{4}{\widehat{C}_{\lambda, f} \widehat{\lambda}^r}.
	\end{equation}
Indeed, since $\widehat{\lambda}<1$, the right-hand side in \eqref{eq_condS} goes to infinity as $r\to \infty$ so that we can ignore the $-1$ on the left-hand side of the equation \eqref{eq_condS}. Now, since $C_{\lambda, f}\geq 1$, we see  
	from the definition of $S$ given in \eqref{eq_timeS} \nat{with $\epsilon=\epszero$}, that
	\begin{equation*}
		(1-e^{-\lambda \vartheta fL})(1-e^{-\gamma})\left(1+\frac{\lambda f  \vartheta}{2}\right)^{{L(1-2\epsilon)-1}} \left(\frac{\lambda   \vartheta^2}{\lambda   \vartheta^2 +1}\right)^r > 16k(2r +1),
	\end{equation*}
	implies condition \eqref{eq:S2}. 
    Recall that $\epsilon = \epszero$, so  the latter inequality is equivalent to 
	\begin{equation*}
		\log\left(\frac{\lambda   \vartheta^2}{1+\lambda   \vartheta^2}\right) > \frac{1}{r}  \left(\log\left(\frac{16k(2r+1)}{(1-e^{-\lambda \vartheta fL})(1-e^{-\gamma})}\right)-\left({\color{black}
        \frac{3L}{{8}}}-1\right)\log\left(1+\frac{\lambda f   \vartheta}{2}\right)\right).
	\end{equation*}
Taking into account that by the definition of $L$ given in \eqref{eq:defL}, $fL \rightarrow \infty $ as either $k \rightarrow \infty$ or $f\rightarrow \infty$, we have
			\[ \frac{1}{r}\log\left(\frac{16(2r+1)}{(1-e^{-\lambda \vartheta fL})(1-e^{-\gamma})}\right) \to 0,\quad  \text{as\,   either}\quad  k\to \infty\quad  \text{or}\quad  f\to \infty.\]
   It follows that \eqref{eq:S2} holds if  
   \begin{equation}\label{eq_ineq_r}
		\log\left(\frac{\lambda   \vartheta^2}{1+\lambda   \vartheta^2}\right) > \frac{1}{r}  \left(\log k-\left({\color{black}\frac{3L}{{8}}}-1\right)\log\left(1+\frac{\lambda f   \vartheta}{2}\right)\right) =: \Delta(f,k).
	\end{equation}
 As this condition is equivalent to 
 \[ \lambda >\frac{\exp \{ \Delta(f,k) \}}{\vartheta^2 ( 1- \exp \{ \Delta(f,k) \}) },  \]
 it suffices to show that $\Delta(f_n,k_n) \rightarrow - \infty$, for suitably chosen $f_n,k_n$, to complete the proof.

By Assumption \eqref{eq:offspringfitness} and Lemma \ref{lem:equivcond}, we have that there exists a sequence $(f_n, k_n)_{n\geq 1}$ such that $f_n + k_n \to \infty $ and
\begin{equation}\label{eq:reformulation} 	\lim_{n\to \infty}  \frac{\log \mathbb{P}(\xi^\vartheta=k_n, \, \mathcal{F} \geq  f_n)}{k_n\log (1+f_n)}=0.\end{equation}

{\color{black}Note that  either 
\begin{equation}\label{eq:bounds_L}   \begin{array}{ll} L \rightarrow \lceil k / 2  \rceil \geq 4 & \mbox{if } f_n \to \infty, \ k_n =  k \geq \kzero, \mbox{ or }\\
 L \geq \frac{\lambda \vartheta^2}{1+2\lambda \vartheta^2} k_n & \mbox{if } k_n \rightarrow \infty .\end{array} \end{equation}
Therefore,
\[ \log k_n / \big({\color{black}3 L /8} - 1)  \log (1 + f_n)\big) \to 0, \quad  \text{as\,   either}\quad  k_n\to \infty\quad  \text{or}\quad  f_n\to \infty. \]
{\color{black}Thus, we can deduce from the definition of $r$ in~\eqref{eq_r_def} that $\Delta(f_n,k_n) \to -\infty$
if }
\[ \frac{\big( {\color{black}3L/8} - 1\big)  \log (1 + f_n)}{\log \mathbb{P} (\xi^\vartheta = k_n,\,  \mathcal{F} \geq f_n )} \to -\infty,   \]
which now follows again by~\eqref{eq:bounds_L} directly from~\eqref{eq:reformulation}.}
This, shows that for this choice of $(f_n,k_n)$
we can guarantee that for $n$ large enough 
$\lambda > e^{ \Delta(f_n,k_n)}/(\vartheta^2 (1 - e^{ \Delta(f_n,k_n)}))$, so that condition~\eqref{eq:S2} holds 
and the process survives strongly.
\end{proof}

\textbf{Acknowledgments}:  The authors would like to thank the anonymous referee and the editor for their detailed comments and suggestions which helped us to improve this article.
N.C.-T. acknowledges support from CONACyT-MEXICO grant no. 636133. This manuscript was partially prepared while N.C.-T. was visiting the Department of Mathematical Sciences at the University of Bath, and she is grateful for the hospitality and collaboration. N.C.-T.  acknowledges support from SAMBa (Statistical Applied Mathematics at Bath) and Dorothea Schlözer-Programm at Georg-August-Universität Göttingen.

\bibliographystyle{abbrv}
\setlength{\bibsep}{1pt plus 0.3ex}
\bibliography{referencias}

\end{document}